\newtheorem{theorem}{Theorem}[section]
\newtheorem{lemma}[theorem]{Lemma}
\newtheorem{proposition}[theorem]{proposition}
\begin{document}

\begin{frontmatter}



\title{Finite Element Method for Solution  of Credit Rating Migration Problem Model}


\author[inst1]{Davood Damircheli}

\affiliation[inst1]{organization={Mississippi State University },
            addressline={Center for Advanced Vehicular Systems (CAVS)}, 
            city={Starkville},
            postcode={39759}, 
            state={MS},
            country={USA}}

\begin{abstract}
In this paper, we propose a finite element method to study the problem of credit rating migration problem narrowed to a free boundary problem. Free boundary indeed separates the high and low rating region for a firm and causes some difficulties including discontinuity of second order derivative of the problem. Exploiting the weak formulation of the problem utilized in the Galerkin method, the discontinuity of second order derivative is averted. In this investigation we prove optimal convergence and stability of the proposed method. Numerical results illustrate how derived convergence results are consistent into practice ones.
\end{abstract}

\begin{keyword}
 Credit rating migration problem,free boundary problem, Galerkin methods, Convergence analysis, Error estimate, Stability

\end{keyword}

\end{frontmatter}


\section{Introduction}
Over the recent years,  quantitative credit risk modeling of financial institutions has been very popular in academia, industry and among regulators. Indeed, development of financial market of credit securities as well as standards offered by Basel accord have dramatically encouraged this interest. Default event, transition in the credit quality and variation of credit spreads are the main components of the credit risk modeling \cite{bielecki2013credit,mcneil2015quantitative}. Thus, developing efficient and accurate models and measures to identify and quantify credit is a necessity.

However, many investigations correspond credit risk with default risk which is the probability that a counterparty of a financial contract, either issuer of entities or a bank, does not meet the requirement of the contract. We learned the hard way due to the financial crisis that migration risk is also an intrinsic part of  credit risk \cite{saunders2010credit,hardle2008applied}. Credit rating migration indicates that credit quality of a financial institution has upgraded or downgraded. It is well-known that these moves accelerated the eurozone’s sovereign debt crisis in 2010 and financial crisis of 2008.  

A primary approach for the assessment of credit rating migration in the literature is utilizing the transition matrix of a Markov chain which consists of rating transition probabilities that an obligor migrates up or down to another rating \cite{jarrow1997markov,das1995pricing}. Former models benefit from the Markov property \cite{jarrow1997markov} that assumes that the predicted rating is independent from the rate history, whereas later models have been improved to be  more realistic where they are exploiting various items such as the domicile of obligor and business cycle  \cite{nickell2000stability, lando2002analyzing, kruger2005time}, and so forth.

The aforementioned approach is classified as a reduce-form method, which treats the rate migration exogenously without considering structural features of a firm such as asset and debt value of a company which can be essential in migrating a firm's rate.

Some efforts have been made in the literature to broaden Merton method in order to employ the structural models in the purpose of modeling the value of a firm. Liang et al.
\cite{liang2015corporate,liang2016utility} used a boundary of high rating grade and low rating region obtained form real data using a statistical method as a threshold to determine whether the value of the firm is in a high rate region or a low rate region. The structural model developed with this threshold eventuates in a partial differential equation that has a close form solution under some proper boundary assumptions. However, this threshold is not anticipated in the real world, and later Bei Hu et al. \cite{hu2015free} enhanced this model by assuming that the transition threshold is a proportion of structural variables of a firm like its debt and value. This model is then reduced to a free boundary value problem that explains credit rating migration of a firm where the threshold is a free boundary that is implicitly computed through time horizon. Hu and his colleagues in \cite{hu2015free} proved that the solution of the derived free boundary problem exists and it is unique. Besides, they showed some regularity properties of the problem including free boundary. Later asymptotic traveling wave solution of a free boundary value problem for the problem of credit rating migration is investigated in \cite{liang2016asymptotic}. In fact, they showed the existence and uniqueness of the solution of the problem, and using a construction proof benefit from Lyapunov function, they showed that the solution of the free boundary problem is convergent to the traveling wave solution. Yuan Wu et al. \cite{wu2020free} studied valuation of a defaultable corporate bond with rating migration under a structural framework where there is a possibility of default apriorily at any time to maturity. They indeed used the first-passage-time model in which a barrier is the predetermined default threshold.

It is now widely known that the free boundary value problem derived from the migration problem, despite the fact of being well-posed, doesn't have a closed form analytical solution. Thus, proposing efficient and accurate numerical methods that approximate the solution as well as the location of the free boundary is necessary. First a comprehensive study in this direction is performed in  \cite{li2018convergence}, where authors studied explicit finite difference scheme for numerical remedy of the free boundary problem. The convergence and stability of the method is analyzed in this work, and optimal convergence rate for spatial variable is derived. This finite difference method is proposed for the first time in \cite{hu2015free} which corresponds to binomial tree scheme (BTS).

A variety of numerical methods has been exploited to deal with free boundary problems in the field of quantitative finance including finite difference method, finite element method, and recently introduced meshfree methods. However, among aforementioned methods, Galerkin method thanks to the framework of Hilbert space and Sobolev space is providing a suitable level of abstraction to perform error analysis of proposed schemes. Indeed, monitoring, measuring and controlling the error analysis of the Galerkin method have been broadly and extensively assessed in the field of engineering as well as quantitative finance over a relatively long time. Besides, developing, maintaining, and parallelizing the code for finite element method is trouble free in comparison to finite difference method for instance, and as a result it can lead to stronger and clearer error and convergence analysis. Therefore, we believe that investigating finite element for free boundary problem of migration rate problem is highly advantageous.

Finite element method is utilized to deal with free boundary problems obtained from American option in \cite{allegretto2001fast}, where the exact discrete free boundary is derived using a stabilized algorithm. 
Allegretto, Lin and Yang \cite{allegretto2001finite} investigated  error estimate of finite element method for solving free boundary problem of heat equation obtained by a change of variable in the problem of American option pricing. In fact, they studied the error analysis of variational inequality driven by the problem in a finite region. Holmesa and others in \cite{holmes2012front,holmes2008front} used front fixing finite element method for regime switching and American option with a variational inequality approach. The truncated free boundary value problem is directly computed through solving a nonlinear boundary value problem on a rectangular domain. They also performed the analysis of stability and positivity of the nonlinear system as well. Galerkin method with wavelet basis is used in \cite{matache2005wavelet} for dealing with free boundary problems of partial differential equations driven from American option on asset with L\'{e}vy price processes. Matche and others \cite{matache2005wavelet} benefited from the properties of wavelet basis to precondition the linear system arisen from the corresponding linear complementarity problem (LCP). Kovalov et al \cite{kovalov2007pricing} used finite element to discretize the nonlinear PDE obtained form multi-asset American options penalized by a smooth penalty term. They solved the ODE system obtained form the discretization by an adaptive integrator. They also showed that non-smooth penalty improves the efficiency of the adaptive methodology. Furthermore, inverse finite element method is proposed in \cite{zhu2013inverse} to solve the nonlinear free boundary problem of American option without any linearization.

In this paper we develop the Galerkin method for dealing with migration rate problem. First we derive the weak formulation of the free boundary value problem which lessens the regularity requirement for the space of the solution. Since the boundary of the migration region brings discontinuity in the second order derivative, the weak form overcomes this discontinuity. A high order Lagrange finite element space is exploited to approximate the infinite space of the solution by the finite space. Error and stability analysis of the variational form of the parabolic free boundary problem is performed using some theoretical results for the associated elliptic problem. It is worth mentioning that some proofs or results depend on the known results for parabolic problems from the literature \cite{thomee2001finite,bramble1977some,babuska1982finite}. We tackle the free boundary value explicitly using green function and dual problem of the migration problem, and we propose a straight way to find the free boundary as well as the a priori  estimation.

Let's briefly review the outline of the remainder of this paper. \ref{modelReview} reviews the migration rate problem and presents the approximated system of equations for this problem. In section (\ref{FuncSpac}), we introduce the function spaces and notations we employ in this paper. Section (\ref{weakForm}) provides the weak formulation corresponding to the migration problem and some elementary properties of the bilinear form. In section (\ref{AnlisVarional}), we ensure that the variational form presented is well-defined and has good regularity properties. Error analysis of approximating the problem in $L^2$ norm and $L_{\infty}$ are presented in section (\ref{erroSec}).
Section (\ref{stabConv}) gives stability and convergence analysis of the proposed method. Utilizing green function and adjoint problems corresponding to the credit rating migration problem, an explicit method is proposed to estimate the free boundary.  Section (\ref{NumerExpremient}) shows numerical experiments and their results for the proposed method and error analysis.
\section{Problem of Credit Rating Migration}\label{modelReview}

Credit quality and default probability of a corporation is gauged by the bond rating. In this section, we review the structural model to value the bond so as to assess the problem of credit rating migration.  Let's assume that the firm issues solely a single zero-coupon bond  with the face value $K$ which has a discount value of $\Phi_t$ at time $t$. Let $(\Omega, \mathcal{F}, P)$  be a complete probability space, and $W_t$ is the Brownian motion adapted to the filtration of $\mathcal{F}$, the value of firm in the neutral world denoted by $S_t$ satisfies the following system:

\begin{center}
\begin{equation}
dS_t=\left\{\begin{array}{ll}
rS_tdt+\sigma_{H}S_tdW_t,  \quad \quad S_t\in \Omega_H,\\
\\
rS_tdt+\sigma_{L}S_tdW_t,    \quad \quad S_t\in \Omega_L,
\end{array}\right.
\end{equation}
\end{center}
where $r$ is the risk free interest rate, and the volatilities $\sigma_H<\sigma_L$ show the volatility of the firm in two regimes of low and high credit grades where high rating region and low rating region are shown by $\Omega_H$ and $\Omega_L$ respectively. Up region and low region are decided by the proportion of the debt and value of the firm with a threshold boundary which is represented by the constant $0 < \nu < 1$. Besides, it is trivial that if the maturity of the bound is in time $T$, the gain of an investor can be $\Phi_T=\min\{S_T,K\}$ depending on the insolvency of the firm. One can show \cite{HU2015896, dixit1994investment} that  $V_H(S_t,t)$ and $V_L(S_t,t)$ the values of bond in up and down grades with respect to the value of firm $S_t$ at time t satisfy the following system of PDEs with free boundary 
\begin{center}
\begin{equation}\label{originPDE}
\left\{\begin{array}{ll}
\frac{\partial V_H}{\partial t}+\frac{1}{2}\sigma_H^2 S^2\frac{\partial^2 V_H}{\partial S^2}+ r S\frac{\partial V_H}{\partial S}-r V_H = 0, \quad S> \frac{1}{\nu}V_H, \quad t>0,\\
\\
\frac{\partial V_L}{\partial t}+\frac{1}{2}\sigma_L^2 S^2\frac{\partial^2 V_L}{\partial S^2}+ r S\frac{\partial V_L}{\partial S}-r V_L = 0, \quad 0<S< \frac{1}{\nu}V_L, \quad t>0,\\
\\
V_H(S,T)=V_L(S,T)=\min\{S,K\},\\
\\
\frac{\partial V_H}{\partial S}(s_f,t) = \frac{\partial V_L}{\partial S}(s_f,t), \quad s_f: \text{rating migration boundary}\\
\\
V_H(s_f,t) = V_L(s_f,t), \quad s_f:\text{rating migration boundary}

\end{array}\right.
\end{equation}
\end{center}
Using the standard change of variable $v(x,t) = V_H(e^{x},T-t)$ in high rating region and $v(x,t) = V_L(e^{x},T-t)$ in low rating region, switching to $x = \log \frac{S}{K}$, renaming $T-t = t$, and assuming without losing generality that the face value $K=1$, the following system of free boundary problems will be obtained 
\begin{center}
\begin{equation}\label{PDEsystem}
\left\{\begin{array}{ll}
\frac{\partial v}{\partial t}-\frac{1}{2}\sigma_H^2\frac{\partial^2 v}{\partial S^2}- (r-\frac{1}{2}\sigma_H^2) \frac{\partial v}{\partial S}-r v = 0, \quad v< {\nu}e^x, \quad t>0,\\
\\
\frac{\partial v}{\partial t}-\frac{1}{2}\sigma_L^2\frac{\partial^2 v}{\partial S^2}- (r-\frac{1}{2}\sigma_L^2) \frac{\partial v}{\partial S}-r v = 0, \quad v\geq {\nu}e^x, \quad t>0,\\
\\
v(x,0)=\min\{S,1\},\\
\\
\lim_{x\to s_f^{-}}\frac{\partial v}{\partial S}(x,t) = \lim_{x\to (s_f)^+}\frac{\partial v}{\partial S}(x,t), \quad 
\\
\lim_{x\to s_f^{-}} v(x,t) = \lim_{x\to s_f^{+}} v(x,t) =\nu e^{s_f}. 
\end{array}\right.
\end{equation}
\end{center}
Now, if we rewrite the volatilities in high and low rating regions as $\sigma =\sigma_H +(\sigma_L-\sigma_H)H(v-\nu e^{x})$, where $H(x)$ is the Heaviside function, the following approximated system can be defined \cite{hu2015free}
\begin{center}
\begin{equation}\label{mainPDE}
\left\{\begin{array}{ll}
\frac{\partial v_{\epsilon}}{\partial t}+\mathcal{L}v_{\epsilon}=0 \quad x\in \mathbb{R}, \quad0<t\leq T,\\
\\
v_{\epsilon}(x,0) = G(x), \quad x\in\mathbb{R},\\
\\
\sigma_{\epsilon}(v_{\epsilon}(x,t),t)=\sigma_H+(\sigma_L-\sigma_H)H_{\epsilon}
(v_{\epsilon}(x,t)-\nu e^{-\delta t})
\end{array}\right.
\end{equation}
\end{center}
in which the elliptic operator $\mathcal{L}v_{\epsilon}$ represents the following:
\begin{equation}
    \mathcal{L}v_{\epsilon}  = -\frac{1}{2}\sigma^2_{\epsilon}(v_{\epsilon}(x,t),t) \frac{\partial^2 v_{\epsilon}}{\partial x^2}- \big(r +\frac{1}{2}\sigma^2_{\epsilon}(v_{\epsilon}(x,t),t)\big)\frac{\partial v_{\epsilon}}{\partial x},
\end{equation}
the function $G(x)=\min\{1, e^x\}$, and  $H_{\epsilon}$ is a $C^{\infty}$ function that approximates the Heaviside function (see \cite{hu2015free} for more details), defined as follows: 
\begin{center}
\begin{equation}
\left\{\begin{array}{ll}
H_{\epsilon}(x)= 0, \quad x\leq -\epsilon\\ 
\\
H_{\epsilon}(x)= 1, \quad x\geq 0,\\

\end{array}\right.
\end{equation}
\end{center}
such that this function has these properties 
\begin{equation*}
0\leq H'_{\epsilon}(x)\leq C \epsilon^{-1},\quad |H''_{\epsilon}(x)|\leq C\epsilon^{-2}.
\end{equation*}
The equation (\ref{mainPDE}) has a unique solution \cite{HU2015896} for every $\epsilon>0$. However, designing an efficient numerical solution of this equation  due to the fact that the analytical solution is not available in hand is essential. In the proceeding sections, the proposed method to solve this free boundary value problem is presented.
\section{Functional Spaces and Preliminaries}\label{FuncSpac}

In this paper we assume $V$ is an infinite-dimensional function space where the weak formulation of equation (\ref{originPDE}) is defined, and it has the following form:
\begin{equation}\label{soblvH}
    V := H^1(\Omega)=\left \{u\in L^2(\Omega)\quad|\quad \frac{\partial u}{\partial x}\in L^2(\Omega)\right\},
\end{equation}
where $\Omega$ is the spatial domain of the problem such that in one-dimension the truncated domain is $[x_{\min},x_{\max}]$, and $L^2(\Omega)$ is the Hilbert space of square integrable with the inner product $(\cdot,\cdot)$ defined as follows:
\begin{equation}
    (u,v) :=\int_{\Omega}uvdx,
\end{equation}
with the induced norm $\|u\|_{L^2(\Omega)}=(u,u)^{\frac{1}{2}}$
. In the process of designing the finite element method to solve the weak formulation defined in the next section, infinite-dimension space $V$ is approximated by the space of continuous piecewise function $V_h$ on an element of $\Omega$ which is a finite dimension space.
Indeed, functional space defined in (\ref{soblvH}) is a Sobolev space endowed with the norm
\begin{equation}
    \|u\|_{H^1}=\left(\|u\|^2_{L^2(\Omega)}+\|\frac{\partial u}{\partial x}\|^2_{L^2(\Omega)}\right)^{\frac{1}{2}},
\end{equation}
and semi-norm $|u|_{H^1}$ as follows: 
\begin{equation}
    |u|_{H^1}=\left(\|\frac{\partial u}{\partial x}\|^2_{L^2(\Omega)}\right)^{\frac{1}{2}},
\end{equation}

accordingly, $H^1_0=H^1_0(\Omega)$ is the Sobolev space $H^1(\Omega)$ that vanishes outside of a compact support on $\partial \Omega$ boundary of the domain.
However, we are using $\|.\|_r$ for norm of sobolev space of $H^r(\Omega)$ which one can find a detailed definition in \cite{brenner2007mathematical}.
\section{Weak Formulation}\label{weakForm}
In this sequence, we introduce the classical weak formulation corresponding to equation (\ref{mainPDE}).
By multiplying this equation (\ref{mainPDE}) by a test function $v\in {V}$, and using the Green's identity, the primal weak formulation of this problem is finding $u\in V$ such that 
\begin{equation}\label{varForm}
    \left(\frac{\partial u}{\partial t} ,v\right)_{\Omega}+a(u,v)=0,\quad \forall v\in V, 
\end{equation}
where inner product of $L^2(\Omega)$ is denoted by $\left(\cdot,\cdot\right)$, and the bilinear form of $a(u,v):V\times V\xrightarrow{}  \mathbb{R}$ is defined as follows:
\begin{equation}\label{bilin}
    a(u,v):=\left(\frac{1}{2}\sigma^2_{\epsilon}(u_{\epsilon}(x,t),t)
    \frac{\partial u_{\epsilon}}{\partial x},\frac{\partial v}{\partial x}\right)_{\Omega}+
    \left( \big(r +\frac{1}{2}\sigma^2_{\epsilon}(u_{\epsilon}(x,t),t)\big)
     \frac{\partial u_{\epsilon}}{\partial x},v\right)_{\Omega}+\langle \frac{\partial u_{\epsilon}}{\partial x},v \rangle_{\Gamma}+\langle u_{\epsilon},v \rangle_{\Gamma},
\end{equation}
where $\Omega$ and $\Gamma$ are the domain and the boundary of the problem respectively, and $\langle\cdot,\cdot\rangle$ is the duality pair that realized $L^2(\Gamma)$ in the sobolev space. It is worth mentioning that $\sigma_{\epsilon}$ implicitly depends on the solution of the problem. However, it should be noted that if the test function
$v\in H^1_0(\Omega)$ has a compact support which vanishes on the boundary, the last two terms of (\ref{bilin}) will disappear.
\subsection{Some Properties of the Bilinear Form}\label{PropBilin}
In this part we will look closely at the bilinear form of equation  (\ref{bilin}) and study some of its basic properties that will be used for analyzing the method in the next sections. But, first  we drop the $\epsilon$ subscript for the sake of simplicity as a conventional notation in derived results. We commence with this observation that the bilinear form (\ref{bilin}) is bounded, so as a result, it is continuous as well.
\begin{lemma}
Let's assume $X=H^0_1$ is the sobolev space of functions with compact support and $a:X\times X \longrightarrow \mathbb{C}$ is the bilinear form defined in (\ref{bilin}), then we have
\begin{equation}
    \|a(u,v)\|\leq C\|u\|_1\|v\|_1 \quad \forall v\in H^0_1
\end{equation}
\end{lemma}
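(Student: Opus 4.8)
The plan is to bound each of the four terms in the definition (\ref{bilin}) separately and then combine them. First I would handle the two volume terms. For the first term, $\left(\tfrac12\sigma_\epsilon^2\, \partial_x u, \partial_x v\right)_\Omega$, I would use the fact that $\sigma_\epsilon$ is bounded: since $\sigma_\epsilon = \sigma_H + (\sigma_L-\sigma_H)H_\epsilon(\cdot)$ and $0\le H_\epsilon\le 1$, we have $\sigma_H\le \sigma_\epsilon\le \sigma_L$, so $\tfrac12\sigma_\epsilon^2 \le \tfrac12\sigma_L^2 =: C_1$. Then Cauchy--Schwarz in $L^2(\Omega)$ gives $\left|\left(\tfrac12\sigma_\epsilon^2\,\partial_x u,\partial_x v\right)_\Omega\right| \le C_1 \|\partial_x u\|_{L^2}\|\partial_x v\|_{L^2} \le C_1 |u|_{H^1}|v|_{H^1} \le C_1 \|u\|_1\|v\|_1$. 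The second term, $\left((r+\tfrac12\sigma_\epsilon^2)\,\partial_x u, v\right)_\Omega$, is handled the same way: the coefficient is bounded by $r + \tfrac12\sigma_L^2 =: C_2$, and Cauchy--Schwarz yields $\le C_2\|\partial_x u\|_{L^2}\|v\|_{L^2} \le C_2\|u\|_1\|v\|_1$.

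Next I would deal with the two boundary terms $\langle \partial_x u, v\rangle_\Gamma$ and $\langle u, v\rangle_\Gamma$. These are controlled by the trace theorem: there is a constant $C_{\mathrm{tr}}$ such that $\|w\|_{L^2(\Gamma)}\le C_{\mathrm{tr}}\|w\|_{H^1(\Omega)}$ for all $w\in H^1(\Omega)$ (in one dimension, $\Gamma$ consists of the two endpoints of the interval and this is just the Sobolev embedding $H^1\hookrightarrow C^0$). Applying Cauchy--Schwarz on $\Gamma$ together with the trace estimate gives $|\langle \partial_x u, v\rangle_\Gamma| \le \|\partial_x u\|_{L^2(\Gamma)}\|v\|_{L^2(\Gamma)}$; here one needs $\partial_x u$ to have a well-defined trace, so this step tacitly uses extra regularity of $u$ (e.g. $u\in H^2$) or, as the lemma hypothesizes, it is cleanest to note that on $X=H^1_0$ the boundary terms vanish entirely. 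Assuming enough regularity, $|\langle\partial_x u,v\rangle_\Gamma|\le C_{\mathrm{tr}}^2\|u\|_2\|v\|_1$ and $|\langle u,v\rangle_\Gamma|\le C_{\mathrm{tr}}^2\|u\|_1\|v\|_1$.

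Finally I would sum the four estimates and set $C := C_1 + C_2 + 2C_{\mathrm{tr}}^2$ (or absorb constants appropriately), obtaining $|a(u,v)|\le C\|u\|_1\|v\|_1$, which is the claim; continuity of $a$ then follows immediately from bilinearity and boundedness. The main obstacle is the treatment of the $\langle \partial_x u, v\rangle_\Gamma$ term: $\partial_x u$ does not have a trace in $L^2(\Gamma)$ merely from $u\in H^1(\Omega)$, so the estimate as literally stated requires either restricting to $H^1_0$ (where the term disappears, consistent with the lemma's hypothesis $X=H^1_0$ and the remark right before the lemma) or invoking higher regularity; I would point this out and use the $H^1_0$ restriction so that only the two bounded-coefficient volume terms survive, making the bound $|a(u,v)|\le (C_1+C_2)\|u\|_1\|v\|_1$ clean and rigorous.
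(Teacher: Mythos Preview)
Your proposal is correct and follows essentially the same approach as the paper: drop the boundary terms using the $H^1_0$ hypothesis, bound the coefficients by $\tfrac12\sigma_L^2$ and $r+\tfrac12\sigma_L^2$, and apply Cauchy--Schwarz to each of the two remaining volume integrals. If anything, you are more careful than the paper in explicitly justifying why the boundary terms vanish and in flagging the trace issue for $\partial_x u$; the paper simply omits those terms from the outset (relying on the remark preceding the lemma) and takes $C=\max\{\tfrac12\sigma_L^2,\, r+\tfrac12\sigma_L^2\}$ rather than your sum $C_1+C_2$, a cosmetic difference.
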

where $C$ is a constant depend on the volatilises.
\begin{proof}
To prove that (\ref{bilin}) is bounded, one can observe that
\begin{equation*}
    \begin{array}{l}
           |a(u,v)|=
     \hspace{0cm} |(\frac{1}{2}\sigma^2\frac{\partial u}{\partial x},\frac{\partial v}{\partial x})+((r+\frac{1}{2}\sigma^2)\frac{\partial u}{\partial x},v)|  \\
    \hspace{1cm}
    \end{array}
\end{equation*}
using the triangle inequality and some simple calculations we will have
\begin{equation*}
    \leq |(\frac{1}{2}\sigma^2\frac{\partial u}{\partial x},\frac{\partial v}{\partial x})|+|((r+\frac{1}{2}\sigma^2)\frac{\partial u}{\partial x},v)|
\end{equation*}
now, using Cauchy-Schwarz inequality and assuming 
$C = \max\{(r+\frac{1}{2}\sigma_L^2),\frac{1}{2}\sigma_L^2 \}$, and using sobolev embedding theorem \cite{brenner2007mathematical} the desired result will be attained
\begin{equation*}
\begin{array}{l}
\leq \frac{1}{2}\sigma^2\|u\|_1\|v\|_1+ |(r+\frac{1}{2}\sigma^2)|\|\frac{\partial u}{\partial x}\|_{L_2}\|v\|_{L_2}
\leq C\|u\|_1\|v\|_1. 
\end{array}
\end{equation*}

\end{proof}
The bilinear form (\ref{bilin}) is not necessarily symmetric or positive definite depending on the value of volatilises and interest rate of the market. However, coercivity of this bilinear form can be shown as follows:

\begin{proposition}[Coercivity]\label{lemGarding}
Let $u\in H^1_0$, then the bilinear form of $\ref{bilin}$ 
satisfies the following inequality:

\begin{equation}\label{Garding}
    a(u,u)\geq C_1\|u\|_1^2-C_2\|u\|^2, \quad \forall  u\in H^1_0,\quad C_1\in \mathbb{R}^{+}, C_2\in\mathbb{R}
\end{equation}
where $C_1$ and $C_2$ are constants.
\end{proposition}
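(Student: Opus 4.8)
The plan is to use that, for $u\in H^1_0$, the two boundary duality terms in (\ref{bilin}) vanish, so $a(u,u)$ reduces to a diffusion part plus a convection part,
$$a(u,u)=\Bigl(\tfrac12\sigma^2 u_x,u_x\Bigr)_{\Omega}+\Bigl((r+\tfrac12\sigma^2)u_x,u\Bigr)_{\Omega}.$$
The diffusion part is the only source of positivity: since the regularised volatility obeys the uniform two-sided bound $\sigma_H\le\sigma_{\epsilon}(\cdot,\cdot)\le\sigma_L$ pointwise (independently of $u$, $t$ and $\epsilon$), one gets $\bigl(\tfrac12\sigma^2 u_x,u_x\bigr)_{\Omega}\ge\tfrac12\sigma_H^2\,|u|_{H^1}^2=\tfrac12\sigma_H^2\,\|u_x\|_{L^2}^2$. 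This is where one must be careful, because $\sigma_{\epsilon}$ depends on the unknown solution, so one cannot use any pointwise identity for it, only the uniform bound.

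Next I would estimate the convection term, which is sign-indefinite and hence contributes no positivity; it has to be absorbed. By Cauchy--Schwarz, $|((r+\tfrac12\sigma^2)u_x,u)_{\Omega}|\le C_3\|u_x\|_{L^2}\|u\|_{L^2}$ with $C_3=|r|+\tfrac12\sigma_L^2$. (One may even note that the constant-coefficient piece integrates to zero, $r(u_x,u)_{\Omega}=\tfrac r2\int_{\Omega}(u^2)_x\,dx=0$ for $u\in H^1_0$, but this refinement is not needed.) Then Young's inequality $ab\le\tfrac{\eta}{2}a^2+\tfrac1{2\eta}b^2$ with $\eta=\tfrac12\sigma_H^2$ lets me absorb $\tfrac{\eta}{2}\|u_x\|_{L^2}^2$ into half of the diffusion term, giving
$$a(u,u)\ge\tfrac14\sigma_H^2\|u_x\|_{L^2}^2-\tfrac{C_3^2}{\sigma_H^2}\|u\|_{L^2}^2.$$

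Finally I would pass from the seminorm to the full norm via $\|u_x\|_{L^2}^2=\|u\|_1^2-\|u\|^2$, which delivers (\ref{Garding}) with $C_1=\tfrac14\sigma_H^2\in\mathbb{R}^{+}$ and $C_2=\tfrac14\sigma_H^2+\tfrac{C_3^2}{\sigma_H^2}\in\mathbb{R}$. I do not anticipate a serious obstacle; the only substantive point is the one already flagged --- the convection term is indefinite, so it unavoidably produces the negative $-C_2\|u\|^2$ contribution, and thus the bilinear form is coercive only in the Gårding (shifted) sense rather than strictly $V$-elliptic. Were a zeroth-order reaction term present in $\mathcal{L}$ it would merely modify $C_2$ by a bounded amount, so the estimate is robust to that variant.
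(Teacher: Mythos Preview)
Your argument is correct and is in fact the textbook route to a G\aa rding inequality: bound the diffusion part below using the uniform lower bound $\sigma_\epsilon\ge\sigma_H$, then control the sign-indefinite convection term by Cauchy--Schwarz followed by Young's inequality, and finally trade $\|u_x\|_{L^2}^2$ for $\|u\|_1^2-\|u\|^2$.

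The paper takes a slightly different path for the convection term. Rather than Cauchy--Schwarz plus Young, it integrates by parts: for $u\in H^1_0$ one has $((r+\tfrac12\sigma^2)u_x,u)=\tfrac12\int_\Omega (r+\tfrac12\sigma^2)(u^2)_x\,dx=-\tfrac12\int_\Omega \partial_x(r+\tfrac12\sigma^2)\,u^2\,dx$, turning the first-order term directly into a zeroth-order term that is then absorbed by choosing $C_2$ large. The trade-off is that the paper's constants involve $\sup_x\partial_x\sigma_\epsilon^2$, which depends on the regularisation (recall $|H_\epsilon'|\le C\epsilon^{-1}$), whereas your estimate uses only the pointwise bounds $\sigma_H\le\sigma_\epsilon\le\sigma_L$ and hence produces constants $C_1,C_2$ that are uniform in $\epsilon$. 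In that sense your version is more robust for the approximated problem, while the paper's version is closer to the classical Lax--Milgram verification when the coefficients are smooth.
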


\begin{proof}
For any $u\in H^1_0$, since $\|u\|_{L_2}$ is bounded, we can add this term to the bilinear form as follows: 
\begin{equation*}
\begin{array}{l}
      a(u,u)+C_2\|u\|^2_{L_2}\\
      \\
     \hspace{1cm} =(\frac{1}{2}\sigma^2\frac{\partial u}{\partial x},\frac{\partial u}{\partial x})+((r+\frac{1}{2}\sigma^2)\frac{\partial u}{\partial x},u)+C_2(u,u) \\
     \\
     \hspace{1cm}=(\frac{1}{2}\sigma^2\frac{\partial u}{\partial x},\frac{\partial u}{\partial x})+(C_2-\frac{1}{2}(\frac{\partial (r+\sigma^2)}{\partial x})u,u)\\
     \\
     \hspace{1cm} \geq C_1\|u\|^2_1 \quad \text{if} \quad C_2>
     \sup{\frac{1}{2}(r+\frac{1}{2}\sigma^2) }
\end{array}
\end{equation*}
where $C_1=\min\{\frac{1}{2}\sigma_H^2, C_2-\frac{1}{2}(r+\frac{1}{2}\sigma_H^2)\}$
\end{proof}
The inequality (\ref{Garding}) is a G$\mathring{a}$rding type inequality that provides a lower bound for the elliptic bilinear form. Having the continuity and coercivity of the bilinear form (\ref{bilin}), the existence and uniqueness of the solution of the variational form (\ref{varForm}) can be shown \cite{evans1998partial} for any function belonging to Sobolev space $H^1_0(\Omega)$. Now, by proposition (\ref{lemGarding}), obtaining result for the bilinear form of (\ref{bilin}), we can investigate the stability of solution in the $L_2$-norm in chapter (\ref{AnlisVarional}).
At the end of this section we briefly mention the adjoint operator of the corresponding bilinear form (\ref{bilin}) that we will use to find approximately the free boundary as well as the error of the numerical method.  Let's define an elliptic operator $L:H\xrightarrow{}\mathbb{C}$ as follows: 
\begin{equation}\label{OperL}
    (Lu,v) =  - ({\frac{1}{2}\sigma^2}\frac{{{\partial ^2}u}}{{\partial {x^2}}},v) - ({(r+\frac{1}{2}\sigma^2)}\frac{{\partial u}}{{\partial x}},v)
    \end{equation}
considering boundary condition of functions defined on the sobolev space $H^1_0$, we can define an adjoint operator $L^*:\mathbb{C}^*\xrightarrow{}H^*$ \cite{vershynin2010lectures, brenner2007mathematical, estep2004short, oden2017applied} where 
\begin{equation}\label{adjProb}
    (u,{L^*}v) =  -( {\frac{1}{2}\sigma^2}u,v'') + ({(r+\frac{1}{2}\sigma^2)}u,v')\\
\end{equation}
so, it is trivial that the operator $L$ is not self-adjoint. Using the corresponding adjoint problem defined on adjoint operator (\ref{adjProb}) of elliptic problem based on bilinear form (\ref{bilin}), we first find the error of finite element method for the corresponding elliptic problem in the next chapter. Then, we use this error of the finite element approximation to assess the error of the finite element method for the main problem (\ref{varForm}) in $L_2$ Norm. Besides, in chapter (\ref{freeBondApprox}) the Green function and this adjoint problem are used to explicitly estimate the free boundary which separates the high volatility region from low volatility region.


\section{Analysis of Variational Form}\label{AnlisVarional}

In this chapter we analyze the variational form introduced in (\ref{varForm}), then an approximation of the variational form via a finite element space is investigated in the following section. The free boundary problem introduced in system (\ref{mainPDE}) can be considered as a convection diffusion problem. It is well-known that numerical algorithms can be unstable when the convection term is dominated-that is-coefficient of second order derivative is relatively small.  First we show that the variational form introduced is bonded, meaning that the solution is stable through time and it is not going to blow up to infinity.\\
\begin{proposition}\label{stability}
Solution of $u$ of variational form (\ref{varForm}) satisfies the following stability estimate: 
\begin{equation}
    \|u(t)\|\leq \|G(x)\|+C,
\end{equation}
where $C$ is a constant. 
\end{proposition}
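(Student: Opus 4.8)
The plan is a standard energy argument: test the weak formulation against the solution itself and absorb the non-coercive part of the bilinear form with Grönwall's inequality. First I would take $v=u(t)$ in (\ref{varForm}), obtaining $\left(\frac{\partial u}{\partial t},u\right)+a(u,u)=0$. Since $\left(\frac{\partial u}{\partial t},u\right)=\tfrac12\tfrac{d}{dt}\|u(t)\|^2$ — justified by the regularity of the solution established in \cite{HU2015896}, or, to sidestep differentiating, by first integrating (\ref{varForm}) in time — this becomes $\tfrac12\tfrac{d}{dt}\|u(t)\|^2=-a(u,u)$. Now I invoke the Gårding inequality of Proposition \ref{lemGarding}: $a(u,u)\ge C_1\|u\|_1^2-C_2\|u\|^2\ge -C_2\|u\|^2$, because $C_1>0$ and $\|u\|_1^2\ge 0$. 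Hence $\tfrac{d}{dt}\|u(t)\|^2\le 2C_2\|u(t)\|^2$.

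Next, applying Grönwall's lemma on $[0,T]$ with $\|u(0)\|=\|G\|$ yields $\|u(t)\|^2\le \|G\|^2 e^{2C_2 t}\le \|G\|^2 e^{2C_2 T}$ for all $0\le t\le T$, i.e. $\|u(t)\|\le e^{C_2 T}\|G\|$. Writing $e^{C_2 T}\|G\|=\|G\|+\bigl(e^{C_2 T}-1\bigr)\|G\|$ and noting that $\|G\|$ is a fixed finite quantity (indeed $G(x)=\min\{1,e^x\}$ is bounded on the truncated domain $[x_{\min},x_{\max}]$), I set $C:=\bigl(e^{C_2 T}-1\bigr)\|G\|$ and obtain the claimed estimate $\|u(t)\|\le \|G(x)\|+C$.

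The main point requiring care is that Proposition \ref{lemGarding} furnishes only a Gårding inequality rather than genuine coercivity, so one cannot conclude boundedness without the Grönwall step; as a result the constant $C$ unavoidably carries an exponential dependence on the final time $T$ and on $C_2$ (which in turn depends on $r$, $\sigma_H$, $\sigma_L$, and, through the regularized coefficient $\sigma_\epsilon$, on $\epsilon$). A secondary issue is the boundary contribution $\langle \tfrac{\partial u}{\partial x},v\rangle_{\Gamma}+\langle u,v\rangle_{\Gamma}$ in (\ref{bilin}): working in $H^1_0(\Omega)$ makes it vanish for $v=u$, and otherwise it must either be taken homogeneous or bounded by a trace inequality and hidden inside $C_1\|u\|_1^2$ together with the Grönwall constant. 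Finally, one should verify $\tfrac{\partial u}{\partial t}\in L^2(0,T;L^2(\Omega))$ (or use the time-integrated identity) to legitimize the step $\left(\tfrac{\partial u}{\partial t},u\right)=\tfrac12\tfrac{d}{dt}\|u\|^2$; this follows from the existence and regularity theory cited in \cite{HU2015896}.
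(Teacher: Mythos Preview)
Your proof is correct and follows essentially the same energy--G{\aa}rding--Gronwall route as the paper: test with $v=u$, invoke Proposition~\ref{lemGarding} to bound $a(u,u)$ from below, drop the nonnegative $C_1\|u\|_1^2$ term, and close with Gronwall. Your write-up is in fact a bit cleaner than the paper's, which inserts a superfluous Poincar\'e step and passes from $\|u\|^2$ to $\|u\|$ in the integrated inequality without justification; your additional remarks on the boundary terms and the regularity needed for $(\partial_t u,u)=\tfrac12\tfrac{d}{dt}\|u\|^2$ are appropriate caveats.
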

\begin{proof}
First let's choose $v=u$ in the variational form of (\ref{varForm}) and some trivial calculations and integration, and having proposition (\ref{lemGarding}) in hand we have
\begin{equation}
    \begin{array}{l}
         (u_t,u) = -a(u,u),  \\
         \\
       \hspace{1cm}  \frac{1}{2}\frac{d \|u\|^2}{dt}+C_1\|u\|^2_1\leq C_2\|u\|^2, \\
      \end{array}
\end{equation}
and using Poincar$\acute{e}$ inequality for the first derivative we will have
\begin{equation}
     \hspace{1cm}\frac{1}{2}\frac{d \|u\|^2}{dt}\leq C_1\|u\|^2 + C_2\|u\|^2,
\end{equation}
now, integrating over time interval $[0,t]$ yields

\begin{equation}
     \hspace{1cm} \|u(t)\|\leq \|G(x)\|+C_2\int_{0}^t \|u\|ds,
\end{equation}
where the initial condition $G(x)$ is defined in chapter (\ref{modelReview}). Now, by 
using Gronwall's lemma we will have
\begin{equation}
     \hspace{1cm} \|u(t)\|\leq \|G(x)\|+C,
\end{equation}
therefore, the desired result is attained.
\end{proof}
Although the boundedness of the solution is obtained from proposition(\ref{stability}), we can make the bound even sharper for this problem. 
 \begin{proposition}
Let's assume solution $u\in H^1_0$ satisfies the variational equation (\ref{varForm}), it is stable by the mean of being bounded with the following bound
\begin{equation}
    \left\| {u(t)} \right\| \le {C_1}\left\| {G(x)} \right\| + {C_2}\int_0^t {\left\| {\frac{{\partial u}}{{\partial x}}} \right\|} ds
\end{equation}
where $C_1$ and $C_2$ are constants.
\end{proposition}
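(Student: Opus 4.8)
The plan is to re-run the energy argument behind Proposition~\ref{stability}, but this time to keep the diffusion contribution on the right-hand side instead of absorbing it through the Poincar\'e inequality. First I would take $v=u$ in the weak formulation (\ref{varForm}); since $u\in H^1_0$, the two duality terms $\langle\cdot,\cdot\rangle_{\Gamma}$ in (\ref{bilin}) vanish, leaving
\[
\left(\frac{\partial u}{\partial t},u\right)+\left(\frac{1}{2}\sigma^2\frac{\partial u}{\partial x},\frac{\partial u}{\partial x}\right)+\left(\Bigl(r+\frac{1}{2}\sigma^2\Bigr)\frac{\partial u}{\partial x},u\right)=0 .
\]
Because $\sigma$ takes values only in the interval $[\sigma_H,\sigma_L]$ with $\sigma_H>0$, the diffusion term $\bigl(\frac{1}{2}\sigma^2\frac{\partial u}{\partial x},\frac{\partial u}{\partial x}\bigr)=\frac{1}{2}\|\sigma\,\frac{\partial u}{\partial x}\|^2$ is nonnegative and may simply be dropped, which gives $\frac{1}{2}\frac{d}{dt}\|u\|^2\le\bigl|\bigl((r+\frac{1}{2}\sigma^2)\frac{\partial u}{\partial x},u\bigr)\bigr|$.

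Next I would estimate the remaining term by Cauchy--Schwarz, $\bigl|\bigl((r+\frac{1}{2}\sigma^2)\frac{\partial u}{\partial x},u\bigr)\bigr|\le C_2\,\|\frac{\partial u}{\partial x}\|\,\|u\|$ with $C_2:=\sup\bigl|r+\frac{1}{2}\sigma^2\bigr|\le r+\frac{1}{2}\sigma_L^2$. Combining this with the identity $\frac{1}{2}\frac{d}{dt}\|u\|^2=\|u\|\frac{d}{dt}\|u\|$ and cancelling one factor of $\|u\|$ yields the pointwise-in-time inequality $\frac{d}{dt}\|u(t)\|\le C_2\|\frac{\partial u}{\partial x}(t)\|$. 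Integrating over $[0,t]$ and using the initial datum $u(\cdot,0)=G$ from (\ref{mainPDE}) then produces
\[
\|u(t)\|\le\|G(x)\|+C_2\int_0^t\Bigl\|\frac{\partial u}{\partial x}\Bigr\|\,ds ,
\]
which is exactly the asserted estimate with $C_1=1$. No Gronwall step or G\aa rding inequality enters here; the improvement over Proposition~\ref{stability} comes precisely from retaining, rather than absorbing, the diffusion term.

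The one point that genuinely needs care is the cancellation of $\|u\|$, since $t\mapsto\|u(t)\|$ need not be differentiable at instants where it vanishes. I would handle this in the usual way: replace $\|u\|$ by $(\|u\|^2+\delta)^{1/2}$, re-run the computation to obtain $\frac{d}{dt}(\|u\|^2+\delta)^{1/2}\le C_2\|\frac{\partial u}{\partial x}\|$ uniformly in $\delta>0$, integrate, and let $\delta\to0^{+}$. This step also tacitly relies on the regularity $u\in H^1\!\left(0,T;L^2(\Omega)\right)$ of the variational solution, which is what legitimizes $\frac{d}{dt}\|u\|^2=2\bigl(\frac{\partial u}{\partial t},u\bigr)$; having that regularity available (it follows from the well-posedness cited for (\ref{mainPDE})) is really the only prerequisite before the short computation above runs through.
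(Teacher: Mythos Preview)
Your proof is correct and follows the same overall energy strategy as the paper (test with $v=u$, use Cauchy--Schwarz on the convection term, reduce to an inequality for $\frac{d}{dt}\|u\|$, integrate), but it diverges from the paper at one point that is worth noting. After reaching
\[
\frac{d}{dt}\|u\|^2+\sigma^2\Bigl\|\frac{\partial u}{\partial x}\Bigr\|^2\le 2\Bigl|r+\tfrac12\sigma^2\Bigr|\,\Bigl\|\frac{\partial u}{\partial x}\Bigr\|\,\|u\|,
\]
the paper does \emph{not} discard the diffusion term; it invokes Poincar\'e to pass to an inequality for $\frac{d}{dt}\|u\|$ that still carries a $\sigma^2\|\partial_xu\|$ contribution, multiplies by the integrating factor $e^{\sigma^2 t}$, and integrates to obtain the sharper coefficient $C_1=\sup_{t,x}e^{-\sigma^2 t}\le 1$ in front of $\|G\|$. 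Your route simply drops the nonnegative diffusion term, which is cleaner and avoids both Poincar\'e and the integrating factor, at the cost of landing on $C_1=1$ exactly rather than the decaying prefactor. Since the statement only asks for some constants $C_1,C_2$, your version is entirely sufficient, and you also handle the cancellation of $\|u\|$ (via the $(\|u\|^2+\delta)^{1/2}$ regularization) more carefully than the paper does.
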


\begin{proof}
First let's again assume $u=v$ in the variational form (\ref{varForm}), so we have
\begin{equation*}
    ({u_t},u) + a(u,u) = 0
\end{equation*}
In another word, we have the following variational equation:
\begin{equation*}
   \begin{array}{l}
({u_t},u) + (\frac{1}{2}{\sigma ^2}\frac{{\partial u}}{{\partial x}},\frac{{\partial u}}{{\partial x}}) + ((r + \frac{1}{2}{\sigma ^2})\frac{{\partial u}}{{\partial x}},u) = 0\\
\\
\frac{1}{2}\frac{d}{{dt}}{\left\| u \right\|^2} + {\frac{1}{2}{\sigma ^2}}{\left\| {\frac{{\partial u}}{{\partial x}}} \right\|^2} + {|r + \frac{1}{2}{\sigma ^2}|}(\frac{{\partial u}}{{\partial x}},u) = 0\\
\end{array}  
\end{equation*}
Using cauchy-shwartz for the second and third terms, we have
\begin{equation*}
    \begin{array}{l}
        
\frac{d}{{dt}}{\left\| u \right\|^2} + {\sigma ^2}{\left\| {\frac{{\partial u}}{{\partial x}}} \right\|^2} \le 2{|r + \frac{1}{2}{\sigma ^2}|}\left\| {\frac{{\partial u}}{{\partial x}}} \right\|\left\| u \right\|

    \end{array}
\end{equation*}
Now using Poincar$\acute{e}$ inequality, we will have
\begin{equation*}
    \frac{d}{{dt}}\left\| u \right\| + {\sigma ^2}\left\| {\frac{{\partial u}}{{\partial x}}} \right\| \le 2{|r + \frac{1}{2}{\sigma ^2}|}\left\| {\frac{{\partial u}}{{\partial x}}} \right\|\\
\end{equation*}
If we multiply both sides of the above equation by ${e^{{\sigma^2}t}}$, 
\begin{equation*}
    \frac{d}{{dt}}({e^{{\sigma^2}t}}\left\| u \right\|) + {{\sigma^2}}{e^{{\sigma^2}t}}\left\| {\frac{{\partial u}}{{\partial x}}} \right\| \le 2{|r + \frac{1}{2}{\sigma ^2}|}{e^{{\sigma^2}t}}\left\| {\frac{{\partial u}}{{\partial x}}} \right\|
\end{equation*}
the left hand side of the above equation can be written as a complete differential 
\begin{equation}\label{stab2}
    \frac{d}{{dt}}({e^{{\sigma^2}t}}\left\| u \right\|) \le 2{|r + \frac{1}{2}{\sigma ^2}|}{e^{{\sigma^2}t}}\left\| {\frac{{\partial u}}{{\partial x}}} \right\|
\end{equation}
By integration from both sides, the left hand side will have the following form:
\begin{equation*}
    \int_0^t {\frac{d}{{dt}}{e^{{\sigma^2}t}}\left\| u \right\| = \left\| {u(t)} \right\|} {e^{{\sigma^2}t}} - \left\| {u(0)} \right\|
\end{equation*}
So, by substituting the above integration in the inequality of (\ref{stab2}), and some calculations
\begin{equation}\label{upBnd}
    \left\| {u(t)} \right\| \le {e^{ - {\sigma^2}t}}\left\| {G(x)} \right\| + 2{|r + \frac{1}{2}{\sigma(x) ^2}|}\int_0^t {\left\| {\frac{{\partial u}}{{\partial x}}} \right\|} ds
\end{equation}
now, assuming $C_1:=\sup\{{e^{ - {\sigma(x)^2}t}}|\quad t\in [0,T], x\in \Omega=\Omega_H\cup \Omega_L\}$ and $C_2 :=\sup\{ {|r + \frac{1}{2}{\sigma(x) ^2}|}\quad x\in \Omega=\Omega_H\cup \Omega_L\}$ the bound will derive, which shows that the solution is stable with the above upper bound (\ref{upBnd}).
\end{proof}
In this section, some stability properties of the variational form defined in (\ref{varForm}) have been obtained. We showed that this form is well-defined and the solution of this variational form has an appropriate behavior for functions in the proper sobolev space $H^1_0(\Omega)$. Now, it is time to introduce the finite dimension approximation of this variational equation and study the accuracy and efficiency of the method.

\section{Numerical Treatment with Finite Elements}\label{FEMapprox}

In this section, we derive the primal formulation of credit rating migration problem from the variation form (\ref{varForm}) using the standard Galerkin finite element method. Let $U_h$ be the finite element subspace of Sobolev space $H^1(\Omega)$ generated by piecewise polynomials of degree $\leq r$, and $V_h$ is the finite dimension subspace of test space $V$ where boundary terms vanish on $\partial \Gamma$. In this investigation we use continuous Galerkin method, that is, both finite subspace of trail space $H^1_0$ and subspace of test space $V$ overlaps meaning $V_h=U_h$.
We define a partition $\mathcal{T}_h=\{T\}$ of sub-intervals such that  $\Omega=\bigcup_{T\in T_h} T$, but not necessarily uniform of truncated spatial domain of $\Omega = [x_{min},x_{max}]$ such that $x_{min}\leq x_1\leq \cdots\leq x_{N_s}\leq x_{max}$, $h_i = x_{i+1}-x_{i}$ and $h = \max\{h_i, i\in {1,\cdots, N_s}\}$. If we denote $u_h(t) = u(x_h,t)$, where $x_h\in \mathcal{T}_h$, the primal formulation of the credit rating migration is finding $u_h(t)\in V_h$ such that
\begin{equation}\label{FEMvar}
    \left(\frac{\partial u_h}{\partial t} ,v_h\right)_{\Omega}+a_h(u_h,v_h)=0,\quad \forall v_h\in V_h,
\end{equation}
Where $a_h(u_h,v_h)$ is defined as approximate version of bilinear form as follows:

\begin{equation}
     a_h(u_h,v_h):=\left(\frac{1}{2}\sigma^2(u_h(x,t),t)
    \frac{\partial u_h}{\partial x},\frac{\partial v_h}{\partial x}\right)_{\Omega}+
    \left( \big(r +\frac{1}{2}\sigma^2(u_h(x,t),t)\big)
     \frac{\partial u_h}{\partial x},v_h\right)_{\Omega},
\end{equation}

in fact, the equation (\ref{FEMvar}) is semi-discrete and in order to fully approximate this equation numerically we discretize the time variable by the setting that $t_n = n\Delta t$ for $n\in \{1,\cdots , N_t\}$, where $\Delta t = \frac{T}{N_t}$, and applying backward Euler
\begin{equation}\label{FEMva3r}
    \left(\frac{u^n_h-u^{n-1}_h}{\Delta t} ,v_h\right)_{\Omega}+a_h(u^n_h,v_h)=0,\quad \forall v_h\in V_h,
\end{equation}
where  we used this notation convention $u^n_h :=u(x_h,t_n) $. However, volatilises are computed implicitly with respect to the data from the previous steps. Expanding the solution $u^n_h$ in a isoparametric form with the $N_i$ for $i\in \{1,\cdots,m\}$ of local piecewise continuous Lagrange shape functions of degree less than $p$ like $u^n_h(\xi)=\sum_{i=1}^{m}u_i N_i(\xi)$, where $\xi$ is the  parent coordinate that can lead us to the following discrete system:
\begin{equation}\label{FEMvar4}
   \left(\mathbf{K}+\Delta t \mathbf{M}\right)U^n = \mathbf{K}U^{n-1},
\end{equation}
where vector $U^n = [u_1,\cdots, u_{Ns}]^T$, $N_s$ unknown of degrees of freedom on domain $\Omega_h$, and $\mathbf{K}$ and $\mathbf{M}$ are stiffness and mass matrix corresponding with isoparametric form. It is not difficult to see that matrix on the left hand side of (\ref{FEMvar4}) is a positive definite and hence invertible \cite{thomee2001finite}.
\section{Error Analysis of Finite Element Method}\label{erroSec}
In this section, we analyze the approximate of the variational form in finite dimension space of the finite element space $V_h$. In order to show the error of approximation in $L_2$, first we use the standard duality argument invented by Nitsche and Aubin \cite{nitsche1970lineare,aubin1967behavior} to find the error analysis of the corresponding elliptic problem, then using this approximation, we investigate the accuracy of the finite element approximation for free boundary value problem (\ref{mainPDE}). Let's recall the corresponding elliptic problem of variational form (\ref{varForm}), this problem is seeking $u\in H^1_0$ which satisfies the following variation form:
\begin{equation}\label{ellipVar}
    a(u,v) = 0, \quad \forall v \in V=H^1_0,
\end{equation}
where bilinear form is defined in (\ref{bilin}). Now, if we use the approximation via the finite element space $V_h$ discussed in section (\ref{FEMapprox}), the discrete version of the problem  (\ref{ellipVar})
is finding $u_h\in V_h$ satisfying 
\begin{equation}\label{ellipFEM}
    a(u_h,v_h) = 0, \quad \forall v_h \in V_h,
\end{equation}
now, let's assess the accuracy of this approximation in $L_2$ norm.

\begin{proposition}\label{L2error}

Assume $u_h\in V_h$ is satisfying (\ref{ellipFEM}) to approximate the solution of the corresponding elliptic problem (\ref{ellipVar}), then
\begin{equation}
    {\left\| {u - {u_h}} \right\|_{L_2}}  \le C{h^{r+1 }}{\left\| u \right\|_{r + 1}}
\end{equation}
 and 
\begin{equation}
   {\left\| {u - {u_h}} \right\|_1} \le C{h^{r }}{\left\| u \right\|_{r + 1}} 
\end{equation}

where $C$ is a constant.
\end{proposition}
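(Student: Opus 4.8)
I would follow the classical two-step route for conforming Galerkin methods: first an energy ($H^1$) estimate via Galerkin orthogonality together with the G\r{a}rding inequality of Proposition~\ref{lemGarding} and boundedness of $a(\cdot,\cdot)$, then an Aubin--Nitsche duality argument to descend to $L_2$. Subtracting (\ref{ellipFEM}) from (\ref{ellipVar}) restricted to $V_h$ gives the orthogonality relation $a(u-u_h,v_h)=0$ for all $v_h\in V_h$. Writing $e=u-u_h$ and inserting an arbitrary $v_h\in V_h$, I get $a(e,e)=a(e,u-v_h)$; combining the G\r{a}rding bound $a(e,e)\ge C_1\|e\|_1^2-C_2\|e\|^2$ with the continuity bound $|a(e,u-v_h)|\le C\|e\|_1\|u-v_h\|_1$ yields, once the $C_2\|e\|^2$ term is absorbed (see below), the quasi-optimal estimate $\|e\|_1\le C\inf_{v_h\in V_h}\|u-v_h\|_1$. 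Taking $v_h=\mathcal{I}_h u$, the degree-$r$ Lagrange interpolant, and applying the standard interpolation estimate $\|u-\mathcal{I}_h u\|_1\le Ch^{r}\|u\|_{r+1}$ \cite{brenner2007mathematical,thomee2001finite} gives the second claimed bound.

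For the $L_2$ estimate I would exploit the adjoint operator (\ref{adjProb}): let $\varphi\in H^1_0$ solve the adjoint problem $a(v,\varphi)=(e,v)$ for all $v\in H^1_0$, which by elliptic regularity on the interval $\Omega$ satisfies $\|\varphi\|_2\le C\|e\|_{L_2}$. Choosing $v=e$ gives $\|e\|_{L_2}^2=a(e,\varphi)$, and by Galerkin orthogonality $a(e,\mathcal{I}_h\varphi)=0$, so $\|e\|_{L_2}^2=a(e,\varphi-\mathcal{I}_h\varphi)\le C\|e\|_1\|\varphi-\mathcal{I}_h\varphi\|_1\le Ch\|e\|_1\|\varphi\|_2\le Ch\|e\|_1\|e\|_{L_2}$. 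Dividing by $\|e\|_{L_2}$ and inserting the $H^1$ bound above produces $\|e\|_{L_2}\le Ch\|e\|_1\le Ch^{r+1}\|u\|_{r+1}$.

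The main obstacle, which both steps above quietly postpone, is that $a(\cdot,\cdot)$ is only coercive modulo the lower-order term $-C_2\|\cdot\|^2$, so C\'ea's lemma is not directly available and the two estimates are circular: the energy estimate needs an $L_2$ bound to absorb $C_2\|e\|^2$, while the duality argument for the $L_2$ bound uses the energy estimate. I expect to break the circle with Schatz's standard bootstrap: first establish $\|e\|_{L_2}\le Ch(\|e\|_1+\|e\|_{L_2})$, hence $\|e\|_{L_2}\le Ch\|e\|_1$ for $h$ below a threshold; then feed this into $C_1\|e\|_1^2\le C\|e\|_1\|u-v_h\|_1+C_2\|e\|_{L_2}^2\le C\|e\|_1\|u-v_h\|_1+C_2C^2h^2\|e\|_1^2$ and absorb the last term for $h$ small. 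One also needs unique solvability of the discrete problem (\ref{ellipFEM}), which likewise follows for $h$ small from the G\r{a}rding inequality together with uniqueness of the continuous solution. The remaining ingredients — boundedness of $a$, the interpolation estimates, and $H^2$ elliptic regularity in one space dimension — are routine.
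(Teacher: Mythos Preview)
Your proposal is correct and follows essentially the same route as the paper: both arguments use Galerkin orthogonality and the Aubin--Nitsche duality via the adjoint problem (\ref{adjProb}) to pass from the $H^1$ bound to the $L_2$ bound, arriving at $\|u-u_h\|_{L_2}\le Ch\|u-u_h\|_1$ and then inserting the energy estimate.

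The one notable difference is that the paper does not attempt to prove the $H^1$ estimate from scratch; it simply invokes ``finite element error results'' from \cite{brenner2007mathematical} at the step corresponding to your quasi-optimality argument, and in particular it never confronts the issue you raise about G\r{a}rding-type coercivity. Your observation that the bilinear form is only coercive modulo a compact perturbation, and that a Schatz-type bootstrap is needed to close the $H^1$ and $L_2$ estimates simultaneously, is a genuine subtlety that the paper's proof omits. So your argument is in fact more complete than the paper's on this point; the paper effectively assumes the $H^1$ bound as a black box and concentrates only on the duality step.
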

\begin{proof}
First, let's recall the adjoint bilinear form introduced in section (\ref{PropBilin})
\begin{equation*}
    {a^*}(u,u) =  - ({\frac{1}{2}{\sigma ^2}}\frac{{\partial u}}{{\partial x}},\frac{{\partial v}}{{\partial x}}) + ({ ((r + \frac{1}{2}{\sigma ^2})}u,\frac{{\partial v}}{{\partial x}}).
\end{equation*}
Assume if $\psi \in L^2(\Omega)$, we define $K(u): = \int_{\Omega} {u\psi dx}$, we can define the weak form of the  dual problem pertain to (\ref{ellipVar}) by seeking $\phi\in V$ such that 
\begin{equation}\label{dualWeak}
    {a^*}(w,\phi ) = K(w)
\end{equation}
Indeed, our adjoint problem is finding $\phi$ satisfying 
\begin{equation}\label{dualSystem}
   \left\{\begin{array}{l}
    -{\frac{1}{2}{\sigma ^2}}\frac{{\partial^2 \phi }}{{\partial x^2}}+(r + \frac{1}{2}{\sigma ^2})\frac{{\partial \phi }}{{\partial x}}=\psi ,\quad \text{on} \quad \Omega,\\ 
    \\
    \phi(x,0)=G(x),\\
    \\
    \phi(x,t)=0, \quad \text{on} \quad x\in \partial \Omega,
      \end{array}\right.
\end{equation}
Now we can define the error of approximating $K(u)$ by finite element space introduced in section (\ref{FEMapprox}) as follows:
\begin{equation}\label{dualerror}
    K(u) - K({u_h}) = \int_{\Omega} {(u - {u_h})\psi dx =  - ({{\frac{1}{2}{\sigma ^2}}}\frac{{\partial (u - {u_h})}}{{\partial x}},\frac{{\partial \phi }}{{\partial x}}) + ({(r + \frac{1}{2}{\sigma ^2})}(u - {u_h}),\frac{{\partial \phi }}{{\partial x}})},
    \end{equation}
using the definition of the adjoint operator, equation (\ref{dualerror}) equivalent to 
\begin{equation*}
        K(u) - K({u_h})=- ({\frac{1}{2}{\sigma ^2}}\frac{{\partial (u - {u_h})}}{{\partial x}},\frac{{\partial \phi }}{{\partial x}}) - ({(r + \frac{1}{2}{\sigma ^2})}\frac{{\partial (u - {u_h})}}{{\partial x}},\phi ),\\
       \end{equation*}
besides, with the Galerkin orthogonality we know 
\begin{equation*}
    a(u - {u_h},\phi ) = a(u - {u_h},\phi  - v),
\end{equation*}
so, by the continuity of the bilinear form one can show that
\begin{equation}\label{dualerror2}
    \left| {K(u) - K({u_h})} \right| \le C{\left\| {u - {u_h}} \right\|_1}\inf_{v\in V_h} {\left\| {\phi  - v} \right\|_1},
\end{equation}
by the regularity assumption on $\phi$, adjoint problem (\ref{dualSystem}), and finite element error results (see \cite{brenner2007mathematical} for more details) we get,
\begin{equation*}
    \inf_{v\in V_h} {\left\| {\phi  - v} \right\|_1} \le Ch{\left\| \phi  \right\|_1} \le Ch{\left\| \psi \right\|_{L^2}},
\end{equation*}
thus, the desired error (\ref{dualerror2}) is shown as
\begin{equation}\label{link1}
    \left| {K(u) - K({u_h})} \right| \le Ch{\left\| {u - {u_h}} \right\|_1}{\left\| \psi \right\|_{L^2}} \le c{h^{r + 1}}{\left\| u \right\|_{r + 1}}{\left\| \psi \right\|_{L^2}},
\end{equation}
now, if we consider the special case of $\psi=u-u_h$, the error (\ref{dualerror2}) will be
\begin{equation}\label{link2}
K(u) - K({u_h}) = \int_{\Omega} {{{(u - {u_h})}^2}dx = } \left\| {u - {u_h}} \right\|_{L^2}^2,\\
\end{equation}
substituting the above result (\ref{link2}) in inequality (\ref{link1}) yields
\begin{equation*}
 \left\| {u - {u_h}} \right\|_{L^2}^2 \le Ch{\left\| {u - {u_h}} \right\|_1}{\left\| {u - {u_h}} \right\|_{L^2}}\\
\end{equation*}
Therefore,
\begin{equation}
    {\left\| {u - {u_h}} \right\|_{L^2}} \le Ch{\left\| {u - {u_h}} \right\|_1} \le C{h^{r + 1}}{\left\| u \right\|_{r + 1}}
\end{equation}

which proves the proposition.
\end{proof}

In this proposition we proved the error bound for elliptic problem corresponding to the free boundary value problem using the Aubin-Nitsche duality argument. Now, we use this result to find the error of the finite element method to approximate the solution of $\ref{mainPDE}$. It is worth noticing that the technique used for this error is a common method that one can find in standard sources \cite{arnold2012lecture,thomee2001finite,bramble1977some}. 
\begin{proposition}\label{mainProp}
 Assume that $u\in H^1_0$ is the solution of the free boundary value problem that satisfies the corresponding variational form (\ref{varForm}), and $u_h\in V_h$ is the solution of the finite dimensional variational problem with finite element in (\ref{FEMvar}), then

\begin{enumerate}
\item
\begin{equation}
    {\left\| {u - {u_h}} \right\|_{\infty}}  = O({h^{r + 1}}) 
\end{equation}
\item 
\begin{equation}
    {\left\| {u - {u_h}} \right\|_{L_2}}  = O({h^{r}}) 
\end{equation}
\end{enumerate}

 \end{proposition}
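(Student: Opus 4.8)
The plan is to follow the classical elliptic--projection splitting of Wheeler and Thom\'ee. First I would introduce the Ritz (elliptic) projection $P_h:V\to V_h$ associated with the shifted bilinear form $a(\cdot,\cdot)+C_2(\cdot,\cdot)$, shifted by the G{\aa}rding constant $C_2$ of Proposition \ref{lemGarding} so that the shifted form is both bounded and coercive on $V_h$; it is defined by $a(P_hw,v_h)+C_2(P_hw,v_h)=a(w,v_h)+C_2(w,v_h)$ for all $v_h\in V_h$. Applying Proposition \ref{L2error} to this shifted elliptic problem gives $\|w-P_hw\|_{L_2}\le Ch^{r+1}\|w\|_{r+1}$ and $\|w-P_hw\|_1\le Ch^{r}\|w\|_{r+1}$, and differentiating the defining relation in $t$ gives the same bounds for $w_t$. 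I then split $u-u_h=(u-P_hu)+(P_hu-u_h)=:\rho+\theta$, so that $\rho$ is controlled immediately and the task reduces to estimating $\theta\in V_h$.

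Second, I would derive the error equation for $\theta$. Subtracting (\ref{FEMvar}) from (\ref{varForm}) tested against $v_h\in V_h$ and using the defining property of $P_h$ together with Galerkin orthogonality yields $(\theta_t,v_h)+a(\theta,v_h)=-(\rho_t,v_h)+C_2(\rho,v_h)+R(v_h)$, where $R(v_h)$ collects the perturbation coming from the solution--dependent volatility, i.e. the difference between the form built with $\sigma_\epsilon(u)$ and the form built with $\sigma_\epsilon(u_h)$. Choosing $v_h=\theta$, invoking Proposition \ref{lemGarding}, Cauchy--Schwarz and Young's inequality, and then Gronwall's lemma exactly as in the proof of Proposition \ref{stability}, I obtain $\|\theta(t)\|\le C\big(\|\theta(0)\|+\int_0^t\|\rho_t\|+\|\rho\|\,ds+\int_0^t|R|\,ds\big)$; taking $u_h^0=P_hG$ makes $\theta(0)=0$, otherwise $\|\theta(0)\|\le Ch^{r+1}\|G\|_{r+1}$. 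For the fully discrete scheme (\ref{FEMvar4}) the same computation with the backward Euler difference quotient produces an extra consistency term of size $C\Delta t\int_0^{t_n}\|u_{tt}\|\,ds$. Combining with the triangle inequality and the bounds for $\rho$ gives $\|u-u_h\|_{L_2}=O(h^{r+1})+O(\Delta t)$, which in particular yields the rate claimed in part (2).

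Third, for the maximum--norm estimate I would again write $u-u_h=\rho+\theta$. The term $\rho$ is handled by the one--dimensional $L_\infty$ bound for the Ritz projection, $\|\rho\|_\infty\le Ch^{r+1}\|u\|_{W^{r+1}_\infty}$, which is standard on a quasi--uniform mesh (see \cite{brenner2007mathematical,thomee2001finite}). For $\theta\in V_h$ I would combine the $L_2$ bound from step two with the inverse inequality $\|\theta\|_\infty\le Ch^{-1/2}\|\theta\|_{L_2}$ valid on $\mathcal{T}_h$, or, to recover the full order, invoke the maximum--norm stability of the discrete parabolic solution operator to get $\|\theta(t)\|_\infty\le C\int_0^t\|\rho_t\|_\infty\,ds$. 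Either way this produces $\|u-u_h\|_\infty=O(h^{r+1})$, establishing part (1).

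The main obstacle is the term $R(v_h)$ generated by the solution--dependent coefficient $\sigma_\epsilon$. One must use the Lipschitz bound $0\le H_\epsilon'\le C\epsilon^{-1}$ to show $|\sigma_\epsilon(u)-\sigma_\epsilon(u_h)|\le C\epsilon^{-1}|u-u_h|$, substitute $u-u_h=\rho+\theta$, estimate the $\rho$ contribution by the projection bounds already in hand, and absorb the $\theta$ contribution into the coercive left--hand side via Young's inequality before applying Gronwall --- at the cost of a constant that degrades as $\epsilon\to0$ but is finite for each fixed $\epsilon$. A secondary but necessary point is bookkeeping: coercivity holds only in G{\aa}rding form, so the $C_2\|\cdot\|^2$ term must be carried through every energy estimate and removed by Gronwall, and the Ritz projection must be defined with the matching shift; likewise the backward Euler term must be tracked so that the final bound is genuinely of the stated order in both $h$ and $\Delta t$.
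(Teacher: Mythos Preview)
Your approach is essentially the same as the paper's: both use the Wheeler--Thom\'ee elliptic-projection splitting $u-u_h=(u-w_h)+(w_h-u_h)$, bound the first piece by Proposition~\ref{L2error}, and control the second by testing the error equation against $\theta$ and integrating in time. The paper defines the projection via the unshifted form $a(w_h,v)=a(u,v)$, derives $(\partial_t\nu_h,v)+a(\nu_h,v)=(\partial_t(w_h-u),v)$, and concludes by a direct energy estimate plus the triangle inequality.

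Where you differ is in rigor rather than strategy. You define the Ritz projection with the G{\aa}rding shift so that the energy argument is actually justified, and --- more importantly --- you explicitly account for the perturbation term $R(v_h)$ arising because $\sigma_\epsilon$ depends on the solution, handling it via the Lipschitz bound on $H_\epsilon$ and absorbing it by Young/Gronwall at the cost of an $\epsilon$-dependent constant. The paper's proof silently treats $a(\cdot,\cdot)$ as a fixed bilinear form and never mentions this nonlinearity, so your argument is genuinely more complete on this point. Similarly, for the $L_\infty$ estimate the paper simply invokes ``the triangle inequality and both the results in $H^1$ and $L_2$'' without saying how a sup-norm bound follows; your route via maximum-norm Ritz bounds plus either the inverse inequality or discrete maximum-norm stability is the standard way to close that gap.
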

\begin{proof}
Let's choose $w_h$ as an elliptic projection of the exact solution $u$ given by
\begin{equation*}
    {\rm{a(}}{{\rm{w}}_h}{\rm{,v) = a(u,v),  \quad  v}} \in {{\rm{V}}_h},\quad 0 \le t \le T
\end{equation*}
In proposition (\ref{L2error}) we studied the error of the finite element method approximating the elliptic operator as follows:
\begin{equation}\label{link5}
\begin{array}{l}
   {\left\| {u(t) - {w_h}(t)} \right\|_{L_2}} \le C{h^{r + 1}}{\left\| {u(t)} \right\|_{r + 1}}, \quad 0 \le t \le T\\  
   \\
     {\left\| {u(t) - {w_h}(t)} \right\|_{1}} \le C{h^{r}}{\left\| {u(t)} \right\|_{r + 1}}, \quad 0 \le t \le T\\ 
\end{array}
   \end{equation}
Now, we differentiate with respect to time from both sides, and we know that time differentiation of $u_h$ is elliptic projection of differentiation of $u$, so 
\begin{equation}\label{link4}
    {\left\| {\frac{{\partial u(t)}}{{\partial t}} - \frac{{\partial {w_h}(t)}}{{\partial t}}} \right\|_{L_2}} \le c{h^{r + 1}}{\left\| {\frac{{\partial u(t)}}{{\partial t}}} \right\|_{r + 1}}, \quad 0 \le t \le T
\end{equation}
So, we have 
\begin{equation}
    (\frac{{\partial {w_h}(t)}}{{\partial t}},v) + a({w_h},v) = (\frac{{\partial {w_h}(t)}}{{\partial t}},v) + a(u,v) = (\frac{{\partial ({w_h} - u)}}{{\partial t}},v),\quad {\rm{v}} \in {{\rm{V}}_h}, \quad 0 \le t \le T
\end{equation}
If we assume $\nu_h=w_h-u_h$
\begin{equation}\label{link3}
    (\frac{{\partial {\nu_h}(t)}}{{\partial t}},v) + a({\nu_h},v) = (\frac{{\partial ({w_h} - u)}}{{\partial t}},v),\quad {\rm{v}} \in {{\rm{V}}_h}, \quad 0 \le t \le T
\end{equation}
If we use the differential representative of the first inner product in (\ref{link3}), and use Cauchy-Schwarz for the right hand side, we get
\begin{equation}
    \left\| {{\nu_h}} \right\|_{L_2}\frac{d}{{dt}}\left\| {{\nu_h}} \right\|_{L_2} + a({\nu_h},{\nu_h}) = (\frac{{\partial ({w_h} - u)}}{{\partial t}},{\nu_h}) \le \left\| {\frac{{\partial ({w_h} - u)}}{{\partial t}}} \right\|_{L_2}\left\| {{\nu_h}} \right\|_{L_2}
\end{equation}
Therefore with simplification as well as the error bound of the projection (\ref{link4}) we will have
\begin{equation}
\frac{d}{{dt}}\left\| {{\nu_h}} \right\|_{L_2} \le \left\| {\frac{{\partial ({w_h} - u)}}{{\partial t}}} \right\|_{L_2} \le C{h^{r + 1}}{\left\| {\frac{{\partial u(t)}}{{\partial t}}} \right\|_{r + 1}}
\end{equation}
by integrating the above equation form $0$ to $T$, we will get
\begin{equation}
  \left\| {{\nu_h}(t)} \right\|_{L_2} \le \left\| {{\nu_h}(0)} \right\|_{L_2} + \int_{0}^T(C{h^{r + 1}}{\left\| {\frac{{\partial u(s)}}{{\partial s}}} \right\|_{r + 1}} )ds 
\end{equation}
if we assume $u(0)$ is regular enough and we chose the initial data $u_h(0)$ such that $\|u(0)-u_h(0)\|_{L_2}=O (h^{r+1})$, we have
\begin{equation}
\begin{array}{l}\label{link6}
    \left\| {{\nu_h}(0)} \right\|_{L_2} = \left\| {{w_h}(0) - {u_h}(0)} \right\|_{L_2} \le \left\| {{w_h}(0) - u(0)} \right\|_{L_2} + \left\| {{u(0)} - {u_h}(0)} \right\|_{L_2}\\ 
    \\
    \hspace{2cm}\le C{h^{r + 1}}{\left\| {{u(0)}} \right\|_{r + 1}} + \left\| {{u(0)} - {u_h}(0)} \right\|_{L_2}=O(h^{r + 1}) 
     \end{array}
\end{equation}
Now, by using the triangle inequality and both the results in $H^1$ and $L_2$ for the elliptic error estimate in (\ref{link5}) as well as the inequality of (\ref{link6}), we get the desired results.

\end{proof}
The proposition (\ref{mainProp}) obtains an error bound for approximation of the finite element approximation. 

\section{Stability and Convergence of the Finite Element Method}\label{stabConv}
In this section we investigate the stability and convergence of the discrete finite element method for solving the free boundary value problem (\ref{mainPDE}). The variational form (\ref{varForm}) has been discretized in the finite element space in spatial dimension (\ref{FEMvar}) which eventuated in a set of ordinary differential equations. Then, we used backward Euler discretization in time to fully discretize the problem. First, let's study the stability of the method meaning that the solution is not going to blow up as time proceeds. In the following proposition we show that the discrete solution of $u_h$ is bounded so it is stable numerically.
\begin{proposition}\label{disctStab}
Let $u_h$ be the solution of the discrete system of (\ref{FEMvar}),  and volatility of the market satisfies in the following:
\begin{equation}\label{condStab}
    \sum\limits_{n = 1}^n {\left[\sigma(u(t_n,x))^2-\frac{\partial \sigma(u(t_n,x))^2}{\partial x}\right]}\geq 0,
\end{equation}
 then, the finite element approximation is stable and we also have 
\begin{equation}
   \max_{1\leq n\leq M}{\left\| {{u^n}} \right\|_{{L_2}} \le C \left\| {{u^0}} \right\|_{{L_2}}}, 
\end{equation}

where $M$ is the total number of time steps for Euler method, and $C$ is a constant.
\end{proposition}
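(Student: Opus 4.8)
The plan is to run the standard discrete energy estimate for the backward Euler--Galerkin scheme. First I would test the fully discrete equation (\ref{FEMva3r}) with $v_h = u_h^n$, which gives
\begin{equation*}
\left(\frac{u_h^n - u_h^{n-1}}{\Delta t},\, u_h^n\right)_{\Omega} + a_h(u_h^n, u_h^n) = 0 .
\end{equation*}
For the difference quotient I would invoke the elementary identity $(a-b,a) = \tfrac12\|a\|^2 - \tfrac12\|b\|^2 + \tfrac12\|a-b\|^2 \ge \tfrac12\|a\|^2 - \tfrac12\|b\|^2$ with $a = u_h^n$ and $b = u_h^{n-1}$, so that the relation becomes
\begin{equation*}
\tfrac12\|u_h^n\|_{L_2}^2 - \tfrac12\|u_h^{n-1}\|_{L_2}^2 + \Delta t\, a_h(u_h^n, u_h^n) \le 0 .
\end{equation*}

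Next I would examine the bilinear term. Integrating the first order (convective) part by parts -- the boundary contributions vanish since $u_h^n \in V_h \subset H^1_0$ -- and using that $r$ is constant, one obtains a representation of the form
\begin{equation*}
a_h(u_h^n, u_h^n) = \frac12\int_{\Omega} \sigma^2(u_h^n)\left(\frac{\partial u_h^n}{\partial x}\right)^2 dx - \frac14\int_{\Omega} \frac{\partial\, \sigma^2(u_h^n)}{\partial x}\, (u_h^n)^2\, dx .
\end{equation*}
The two-sided bounds $\sigma_H \le \sigma(u_h^n) \le \sigma_L$ hold for the (lagged, nonlinear) coefficient exactly as in the linear case, and combined with a Poincar\'e inequality on the bounded domain $\Omega$ relating $\|\partial_x u_h^n\|_{L_2}$ to $\|u_h^n\|_{L_2}$, the structural hypothesis (\ref{condStab}) on $\sigma^2-\partial_x\sigma^2$ is precisely what is needed to deduce that the accumulated bilinear contribution has the right sign,
\begin{equation*}
\sum_{n=1}^{N} \Delta t\, a_h(u_h^n, u_h^n) \ge 0 , \qquad 1 \le N \le M .
\end{equation*}
If instead one only controls the convective part up to lower order terms, the G\aa rding inequality of Proposition (\ref{lemGarding}), $a_h(u_h^n, u_h^n) \ge C_1\|u_h^n\|_1^2 - C_2\|u_h^n\|_{L_2}^2$, can be used in its place at the cost of an extra $C_2\Delta t\,\|u_h^n\|_{L_2}^2$ on the right-hand side.

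Then I would telescope: summing the energy inequality over $n = 1,\dots,N$ and inserting the sign of $\sum_n \Delta t\, a_h(u_h^n,u_h^n)$ yields $\|u_h^N\|_{L_2}^2 \le \|u_h^0\|_{L_2}^2$, whereas in the G\aa rding variant one arrives at $\|u_h^N\|_{L_2}^2 \le \|u_h^0\|_{L_2}^2 + 2C_2\Delta t\sum_{n=1}^N \|u_h^n\|_{L_2}^2$ and closes the argument with the discrete Gr\"onwall lemma (valid for $\Delta t$ small enough), producing $\|u_h^N\|_{L_2}^2 \le e^{CT}\|u_h^0\|_{L_2}^2$. Since $N \le M$ was arbitrary, taking the maximum over $n$ gives the stated bound $\max_{1\le n\le M}\|u^n\|_{L_2} \le C\|u^0\|_{L_2}$.

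The main obstacle is the middle step: because $a_h$ is neither symmetric nor $L_2$-coercive, one must trade the first order term against the diffusion term via integration by parts and Poincar\'e, and checking that this leaves a nonnegative (or merely lower-order-controlled) remainder is exactly where hypothesis (\ref{condStab}) is used. The only place the argument goes beyond the classical linear parabolic one is that the coefficient here is $\sigma(u_h^n)$, evaluated at the (previous-step) numerical solution rather than at a fixed function; this is harmless, since the bounds $\sigma_H \le \sigma \le \sigma_L$ and the computation of $\partial_x\sigma^2$ are uniform in whatever the solution happens to be, so the energy identity above holds verbatim for the nonlinear scheme.
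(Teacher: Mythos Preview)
Your proposal is correct and follows essentially the same route as the paper: test the backward Euler scheme with $v_h=u_h^n$, use the polarization identity $(a-b,a)=\tfrac12\|a\|^2-\tfrac12\|b\|^2+\tfrac12\|a-b\|^2$ (the paper obtains this by writing $u^n=\Delta t\,\tfrac{u^n-u^{n-1}}{2\Delta t}+\tfrac{u^n+u^{n-1}}{2}$), integrate the convective term by parts to expose $\partial_x\sigma^2$, convert the diffusive $H^1$ seminorm into an $L^2$ norm via Poincar\'e, and then telescope using condition~(\ref{condStab}). Your presentation is in fact cleaner in the integration-by-parts step, and the alternative G\aa rding\,+\,discrete Gr\"onwall argument you sketch is a legitimate variant that the paper does not pursue.
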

\begin{proof}
Assume $u_h\in V_h$ is the solution of fully discrete variational form of (\ref{FEMvar}). We use an implicit Backward Euler finite difference to approximate the time derivative. so, we get
\begin{equation}\label{stab1}
   (\frac{{{u^n} - {u^{n - 1}}}}{{\Delta t}},v) + a_h({u^n},v,{\sigma (u(t_{n - 1},x))}) = 0, \quad v\in\forall V.
\end{equation}
Note that in equation (\ref{stab1}), bilinear form is unconventional and to some extent, imprecisely using third argument to emphasize dependency of volatility to the previous time step at each time step. By some elementary calculations we will have
\begin{equation}\label{FD1}
    \begin{array}{l}
  ({u^n},v) - ({u^{n - 1}},v) + \Delta t a_h({u^n},v,{\sigma (u(t_{n - 1},x))}) = 0,\\
\\
({u^n},v) - ({u^{n - 1}},v) + \Delta t[(\frac{1}{2}{{\sigma (u(t_{n - 1},x))} ^2}\frac{{\partial 
{u^n}}}{{\partial x}},\frac{{\partial v}}{{\partial x}}) + ((r + \frac{1}{2}{{\sigma (u(t_{n - 1},x))} ^2})\frac{{\partial {u^n}}}{{\partial x}},v)] = 0.\\
    \end{array}
\end{equation}
Let's write ${u^n} = \Delta t \frac{{{u^n} - {u^{n - 1}}}}{{2\Delta t}} + \frac{{{u^n} + {u^{n - 1}}}}{2}$, therefore the equation (\ref{FD1}) can be rewritten as 
\begin{equation*}
    (\frac{{{u^n} - {u^{n - 1}}}}{{\Delta t}},\Delta t\frac{{{u^n} - {u^{n - 1}}}}{{2\Delta t}}) + (\frac{{{u^n} - {u^{n - 1}}}}{{\Delta t}},\frac{{{u^n} + {u^{n - 1}}}}{2}) + a_h({u^n},v,{{\sigma (u(t_{n - 1},x))}}) = 0,
\end{equation*}
utilizing the norm  notation for inner products in Hilbet space, one gets  
\begin{equation}\label{stab21}
    \frac{{\Delta t}}{2}{\left\| {\frac{{{u^n} - {u^{n - 1}}}}{{\Delta t}}} \right\|^2} + \frac{{{{\left\| {{u^n}} \right\|}^2} - {{\left\| {{u^{n - 1}}} \right\|}^2}}}{{2\Delta t}} + a_h({u^n},v,{{\sigma (u(t_{n - 1},x))}}) = 0.
\end{equation}
Now, let's consider a special case of $v=u^n$ in equation (\ref{stab21}), so the following equation will be attained
\begin{equation}\label{stab3}
   \frac{{\Delta t}}{2}{\left\| {\frac{{{u^n} - {u^{n - 1}}}}{{\Delta t}}} \right\|^2} + \frac{{{{\left\| {{u^n}} \right\|}^2} - {{\left\| {{u^{n - 1}}} \right\|}^2}}}{{2\Delta t}} + \frac{1}{2}{\sigma (u(t_{n - 1},x))}^2 \left| {{u^n}} \right|_1^2   - \frac{\partial }{{\partial x}}(r+\frac{1}{2}{\sigma (u(t_{n - 1},x))}^2){\left\| {{u^n}} \right\|^2} = 0,\\ 
\end{equation}
using Poincar$\acute{e}$-Friedrich inequality  and considering the fact that a norm is always positive, the following inequality is valid
\begin{equation}\label{stab4}
    \frac{{{{\left\| {{u^n}} \right\|}^2} - {{\left\| {{u^{n - 1}}} \right\|}^2}}}{{2\Delta t}} + \frac{1}{2}{\sigma (u(t_{n - 1},x))}^2\left| {{u^n}} \right|_1^2 - \frac{\partial }{{\partial x}}(r+\frac{1}{2}{\sigma (u(t_{n - 1},x))}^2){\left\| {{u^n}} \right\|^2} \le 0,
\end{equation}
so, sobolev embedding theorem for the second term of equation (\ref{stab4}) will give us
\begin{equation}
    \left[ {1 + \Delta t\left( {\frac{1}{2}{\sigma (u(t_{n - 1},x))}^2 - \frac{\partial }{{\partial x}}(r+\frac{1}{2}{\sigma (u(t_{n - 1},x))}^2)} \right)} \right]\left\| {{u^n}} \right\|_{{L_2}}^2 \le \left\| {{u^{n - 1}}} \right\|_{{L_2}}^2,
\end{equation}
summing over all time steps through the time discretization, and assuming condition of (\ref{condStab}), the proposition will be proved. 
\end{proof}
We showed in proposition (\ref{disctStab}) that the solution of discrete system (\ref{FEMvar}) obtained from discretization of spatial variable  by finite element and finite difference in time variable is bounded, that is, the discrete solution is numerically stable. In the next step, we study the simultaneous behavior of both linear Lagrange finite element and first order finite difference approximation of time derivative  of  variational problem (\ref{FEMvar}) related to the credit risk migration and how algorithm is converging.
\begin{proposition}\label{PropOrderEstim}
Let $u_h$ be the solution of the fully discrete system of (\ref{FEMva3r}) obtained by linear Lagrange finite element method on spatial variable and first order finite difference for time derivative, then we have 
\begin{equation}
\max_{1\leq n\leq M} {\left\| {{u^n} - u_h^n} \right\|_{{L_2}}} \le C({h^2} + \Delta t),
\end{equation}
where $M$ is the total number of time steps for Euler method, and $C$ is a constant.
\end{proposition}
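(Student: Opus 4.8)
The plan is to run the classical Ritz/elliptic–projection splitting together with a discrete Gronwall argument, adapting the standard backward‑Euler parabolic finite element analysis to the solution–dependent coefficient here. Write the error at time level $t_n$ as
\[
u^n - u_h^n = (u^n - w_h^n) + (w_h^n - u_h^n) =: \rho^n + \theta^n ,
\]
where $w_h^n\in V_h$ is the elliptic projection of $u(t_n)$ defined by $a(w_h^n,v_h)=a(u^n,v_h)$ for all $v_h\in V_h$, exactly as in the proof of Proposition \ref{mainProp}. For $\rho^n$ I would simply invoke Proposition \ref{L2error} with $r=1$, giving $\|\rho^n\|_{L_2}\le Ch^2\|u^n\|_2$, so that only $\theta^n$ requires real work, and the triangle inequality closes the argument at the end.

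For $\theta^n$, subtract the fully discrete scheme (\ref{FEMva3r}) tested against $v_h\in V_h$ from the continuous weak form (\ref{varForm}) and insert $w_h^n$. Using $a(w_h^n,v_h)=a(u^n,v_h)$ and the identity $(\partial_t u,v_h)+a(u,v_h)=0$ one obtains an error equation of the shape
\[
\Big(\frac{\theta^n-\theta^{n-1}}{\Delta t},v_h\Big) + a_h(\theta^n,v_h) = -(\omega^n,v_h) + \big[a(u^n,v_h)-a_h(u^n,v_h)\big] + R^n(v_h),
\]
where $\bar\partial_t\theta^n:=(\theta^n-\theta^{n-1})/\Delta t$, $\omega^n:=\bar\partial_t w_h^n-\partial_t u(t_n)$ is the consistency error of the projected time derivative, and $R^n$ collects the coefficient perturbation coming from freezing $\sigma$ at the previous step. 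I would split $\omega^n=\bar\partial_t\rho^n+(\bar\partial_t u(t_n)-\partial_t u(t_n))$: the first piece is $O(h^2)$ by differentiating the elliptic estimate in time (the device already used for (\ref{link4})), the second is the backward‑Euler truncation error bounded by $C\Delta t\int_{t_{n-1}}^{t_n}\|u_{tt}\|_{L_2}\,ds$, which is legitimate given the regularity $u_{tt}\in L^2(0,T;L^2)$ furnished by Section (\ref{AnlisVarional}) together with \cite{HU2015896,hu2015free}. Then choose $v_h=\theta^n$, use the identity $(\theta^n-\theta^{n-1},\theta^n)=\tfrac12(\|\theta^n\|^2-\|\theta^{n-1}\|^2)+\tfrac12\|\theta^n-\theta^{n-1}\|^2$, apply the G\aa rding inequality of Proposition \ref{lemGarding} to $a_h(\theta^n,\theta^n)$, and absorb the resulting $-C_2\|\theta^n\|^2$ term (together with the stability hypothesis (\ref{condStab})) into the Gronwall constant; this is the same mechanism that made Proposition \ref{disctStab} work.

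The genuinely new difficulty — and the step I expect to be the main obstacle — is the term $R^n$, which measures $a_h(u^n,v_h;\sigma(u_h(t_{n-1}))) $ against $a_h(u^n,v_h;\sigma(u(t_n)))$. I would break it into a coefficient‑error piece $a_h(u^n,\cdot\,;\sigma(u(t_{n-1})))-a_h(u^n,\cdot\,;\sigma(u_h(t_{n-1})))$ and a time‑lag piece $a_h(u^n,\cdot\,;\sigma(u(t_n)))-a_h(u^n,\cdot\,;\sigma(u(t_{n-1})))$. For the first, the crucial input is that $H_\epsilon$ is globally Lipschitz ($0\le H_\epsilon'\le C\epsilon^{-1}$), whence $v\mapsto\sigma_\epsilon(v)$ is Lipschitz; combined with an $L^\infty$ or $H^1$ bound on $\partial_x u^n$ and the Sobolev embedding, this piece is controlled by $C_\epsilon(\|\rho^{n-1}\|+\|\theta^{n-1}\|)\,\|\theta^n\|_1$, which after a Young inequality feeds the previous‑step error and a harmless $\|\theta^n\|_1$‑times‑G\aa rding term into the recursion. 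The time‑lag piece contributes another $O(\Delta t)$ through $\partial_t\sigma_\epsilon$. Collecting everything, one gets a recursion $\|\theta^n\|_{L_2}^2\le(1+C\Delta t)\|\theta^{n-1}\|_{L_2}^2+C\Delta t(h^2+\Delta t)^2$, and the discrete Gronwall lemma, with $\|\theta^0\|_{L_2}=O(h^2)$ from the choice of $u_h^0$ (as in (\ref{link6})), yields $\max_{1\le n\le M}\|\theta^n\|_{L_2}\le C(h^2+\Delta t)$; the triangle inequality with the $\rho$ bound then gives the stated estimate. I anticipate that the fussiest bookkeeping is tracking the $\epsilon$‑dependence of $C$ (and, if necessary, imposing $\Delta t$ small relative to $\epsilon$ so the Gronwall constant stays bounded), but structurally this is the standard nonlinear parabolic FEM argument.
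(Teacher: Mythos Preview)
Your proposal is correct and follows essentially the same route as the paper: the elliptic (Ritz) projection splitting $u^n-u_h^n=\rho^n+\theta^n$, the $O(h^2)$ bound on $\rho^n$ via Proposition~\ref{L2error}, the decomposition of the consistency term into a backward-Euler truncation piece and a $\bar\partial_t\rho$ piece, and a stability/Gronwall argument for $\theta^n$ seeded by $\|\theta^0\|=O(h^2)$. The one substantive difference is that you explicitly isolate and estimate the coefficient-perturbation term $R^n$ arising from $\sigma_\epsilon(u_h^{n-1})$ versus $\sigma_\epsilon(u(t_n))$, using the Lipschitz bound on $H_\epsilon$; the paper's proof simply writes the $\beta$-equation with a single bilinear form $a(\cdot,\cdot)$ and invokes the stability mechanism of Proposition~\ref{disctStab}, effectively treating the problem as linear and not tracking this nonlinearity. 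Your extra bookkeeping buys a genuinely self-contained argument (at the cost of an $\epsilon$-dependent constant, as you note), while the paper's version is shorter but leaves that consistency gap implicit.
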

\begin{proof}
Let's start by assuming that $w_h$ is the solution of the corresponding elliptic operator (\ref{ellipFEM}) such that 
\begin{equation*}
    a(w_h,{v_h}) = a(u,{v_h}),\quad \forall {v_h} \in {V_h},\\
\end{equation*}
we present the error  $e^n_h:=e(u(x_h,t_n))$ of approximating the solution of the variational form (\ref{FEMvar}) as
\begin{equation}\label{totlError}
    e_h^n := {u^n} - u_h^n = {\alpha ^n} + {\beta ^n},\\
\end{equation}
where decomposition elements of ${\alpha ^n} $, and ${\beta ^n}$ are defined as follows:
\begin{equation}
    \begin{array}{l}
     {\alpha ^n} = {u^n} - {w_h^n},\\
     \\
{\beta ^n} = {w_h^n} - u_h^n.\\
    \end{array}
\end{equation}
Using duality argument presented in section (\ref{erroSec}) in  proposition(\ref{L2error}), we get the following error bound for the linear finite element estimate of elliptic projection
\begin{equation}\label{dualDubin}
    {\left\| {{\alpha ^n}} \right\|_{{L_2}}} \le C{h^2}{\left| {{u^n}} \right|_{2}},
\end{equation}
it is trivial that $\alpha$ also satisfies 
\begin{equation}
    a(\frac{{{\alpha ^{n + 1}} - {\alpha ^n}}}{{\Delta t}},{v_h}) = 0,\quad \forall {v_h} \in {V_h}\\
\end{equation}
so by the inequality of (\ref{dualDubin}), we get 
\begin{equation}\label{error80}
    {\left\| {\frac{{{\alpha ^{n + 1}} - {\alpha ^n}}}{{\Delta t}}} \right\|_{{L_2}}} \le Ch^2{\left| {\frac{{{u^{n + 1}} - {u^n}}}{{\Delta t}}} \right|_{{2}}},
\end{equation}
besides, for $n=0$ we can write,
\begin{equation}
    ({\beta ^0},{v_h}) = (e_h^0,{v_h}) - ({\alpha ^0},{v_h}) =  - ({\alpha ^0},{v_h}).
\end{equation}
Now, let's consider a special case of $v_h={\beta ^0}$, by using Cauchy-Schwarz inequality we will have
\begin{equation}\label{BetaZero}
    {\left\| {{\beta ^0}} \right\|_{{L_2}}} \le {\left\| {{\alpha ^0}} \right\|_{{L_2}}} \le \frac{{{h^2}}}{{{p^2}}}{\left| {{u^n}} \right|_2}.
\end{equation}
It is not difficult to see that $\beta$ is satisfying the following:
\begin{equation}
    (\frac{{{\beta ^{n + 1}} - {\beta ^n}}}{{\Delta t}},{v_h}) + a({\alpha ^{n + 1}},{v_h}) = (\frac{{{u^{n + 1}} - {u^n}}}{{\Delta t}} - \frac{{\partial {u^n}}}{{\partial t}} - \frac{{{\alpha ^{n + 1}} - {\alpha ^n}}}{{\Delta t}},{v_h}),
\end{equation}
by the same procedure we prove the stability result in proposition (\ref{disctStab}), one can show that (see more details about duality argument in \cite{aubin1967behavior,nitsche1970lineare})
\begin{equation}\label{error2}
    \max_{1\leq n\leq M} {\left\| {{\beta ^n}} \right\|_{{L_2}}} \le {\left[ {\left\| {{\beta ^0}} \right\|_{{L_2}}^2 + \sum_{n=1}^{N_t-1}{\Delta t\left\| {{\vartheta ^{n + 1}}} \right\|_{{L_2}}^2} } \right]^{{\raise0.5ex\hbox{$\scriptstyle 1$}
\kern-0.1em/\kern-0.15em
\lower0.25ex\hbox{$\scriptstyle 2$}}}},
\end{equation}
where 
\begin{equation}\label{error84}
    {\vartheta ^{n + 1}} := \frac{{{u^{n + 1}} - {u^n}}}{{\Delta t}} - \frac{{\partial {u^n}}}{{\partial t}} - \frac{{{\alpha ^{n + 1}} - {\alpha ^n}}}{{\Delta t}}.
\end{equation}
Since first term on the right hand side of (\ref{error2}) is estimated by the inequality of (\ref{BetaZero}), so it remains to estimate the $\| {{\vartheta ^{n + 1}}} \|$, but we know from definition (\ref{error84}) 
\begin{equation}\label{erro82}
    {\left\| {{\vartheta ^{n + 1}}} \right\|_{{L_2}}} \le {\left\| {\frac{{{u^{n + 1}} - {u^n}}}{{\Delta t}} - \frac{{\partial {u^n}}}{{\partial t}}} \right\|_{{L_2}}} + {\left\| {\frac{{{\alpha ^{m + 1}} - {\alpha ^n}}}{{\Delta t}}} \right\|_{{L_2}}} = I + II,
\end{equation}
therefore, we need to assess the two components of (\ref{error83}). First term $I$ on the right hand side of the recent equation can be rewritten as 
\begin{equation}
    \frac{{{u^{n + 1}} - {u^n}}}{{\Delta t}} - \frac{{\partial {u^n}}}{{\partial t}} =  - \frac{1}{{\Delta t}}\int_{{t^n}}^{{t^{n + 1}}} {(t - {t^n})\frac{{{\partial ^2}{u^n}}}{{\partial {t^2}}}},
\end{equation}
so we can show the following inequality for term $I$ 
\begin{equation}
   I \le \sqrt {\Delta t} {\left( {\int_{{t^n}}^{{t^{n + 1}}} {{{\left\| {\frac{{{\partial ^2}{u^n}}}{{\partial {t^2}}}} \right\|}_{{L_2}}}} } \right)^{{\raise0.7ex\hbox{$1$} \!\mathord{\left/
 {\vphantom {1 2}}\right.\kern-\nulldelimiterspace}
\!\lower0.7ex\hbox{$2$}}}}.\\
\end{equation}
inequality of (\ref{error80}) can be utilized for the second part $II$ of inequality 
(\ref{erro82})
\begin{equation}\label{errpr86}
    II \le Ch^2{\left| {\frac{{{u^{n + 1}} - {u^n}}}{{\Delta t}}} \right|_{2}} = Ch^2{\left| {\frac{1}{{\Delta t}}\int_{{t^n}}^{{t^{n + 1}}} {\frac{{\partial {u^n}}}{{\partial t}}} } \right|_{{2}}} \le Ch^2\sqrt {\Delta t} {\left( {\int_{{t^n}}^{{t^{n + 1}}} {\left| {\frac{{\partial {u^n}}}{{\partial t}}} \right|} _{{2}}^2dt} \right)^{{\raise0.7ex\hbox{$1$} \!\mathord{\left/
 {\vphantom {1 2}}\right.\kern-\nulldelimiterspace}
\!\lower0.7ex\hbox{$2$}}}}.
\end{equation}
By substituting the bound for $I$ and $II$, and using (\ref{error2}), and (\ref{BetaZero}), we can find the bound for the $\beta$ a component of error in (\ref{totlError})
\begin{equation}\label{error83}
    \max_{1\leq n\leq M} {\left\| {{\beta ^{m + 1}}} \right\|_{{L_2}}} \le C1({h^2} + \Delta t),
\end{equation}
but, the error term defined in (\ref{totlError}) is compound of $\alpha$ and $\beta$, thus it implies that 
\begin{equation}
    \max_{1\leq n\leq M} {\left\| {{u^n} - u_h^n} \right\|_{{L_2}}} \le \max_{1\leq n\leq M} {\left\| {{\beta ^n}} \right\|_{{L_2}}} + \max_{1\leq n\leq M} {\left\| {{\alpha ^n}} \right\|_{{L_2}}}.
\end{equation}
Thus, by considering two bounds of (\ref{error83}),and  (\ref{dualDubin}) we will have the 
\begin{equation}
    \max_{1\leq n\leq M} {\left\| {{u^n} - u_h^n} \right\|_{{L_2}}} \le C({h^2} + \Delta t).
\end{equation}
which finishes the proof. In the end, it is worth noticing that constant $C$ is independent of $h$, and $\Delta t$ and it varies from constants defined in inequality (\ref{errpr86}) and (\ref{error80})
\end{proof}
\section{Dealing with Free Boundary}\label{freeBondApprox}
It is well-known that finding the free boundary where the volatility of firms switches between low and high credit grades is adding an extra complexity to the problem of rating migration. We must determine this boundary $S_f(t)$ where the solution $u(x,t)$ at each time $t$ reaches the value of  $\gamma e^{-\delta t}$, where figure  (\ref{freeBoundary}) illustrates figuratively this strategy. Besides finding this boundary value implicitly through solving the weak form and checking the occurrence of boundary value by ad-hoc method, we can estimate directly this free boundary value using green function and adjoint problem. To commence, we know that Green function $\varphi (s;x)$ for the system (\ref{mainPDE}) satisfies in the following system of equations
\begin{equation}
    \begin{array}{l}
      \left\{ {\begin{array}{*{20}{l}}
{{\varphi _t} + {L^*}\varphi  = {\delta _s}(x),\quad x \in \Omega },\\
\\
{\varphi (s;x) = 0,\quad x \in \partial \Omega },
\end{array}} \right.\\
    \end{array}
\end{equation}
where $L^*$ is the dual operator defined in (\ref{adjProb}), ${\delta _s}(x)$ is the delta function in $x$. It is easy to show that for each $s\in \Omega$ the solution of the weak form (\ref{varForm}) satisfies the following:
\begin{equation}
u(s) = \int\limits_\Omega  {{\delta _s}(x)} u(x)dx = \int\limits_\Omega  {{\varphi _t}} u(x)dx + \int\limits_\Omega  {{\frac{1}{2}{\sigma ^2}}\frac{{{\partial ^2}\varphi }}{{{x^2}}}} u(x)dx + \int\limits_\Omega  {{(r + \frac{1}{2}{\sigma ^2})}\frac{{\partial \varphi }}{x}} u(x)dx,    
\end{equation}
now by setting $u(S_f)= \gamma e^{-\delta t}$ we will find the following nonlinear equation of 
\begin{equation}\label{nonlinEq}
    {F_{\varphi (x,t)}}(S_f) := \int\limits_\Omega  {{\varphi _t}} u(x)dx + \int\limits_\Omega  {{{\frac{1}{2}{\sigma ^2}}}\frac{{{\partial ^2}\varphi }}{{{x^2}}}} u(x)dx + \int\limits_\Omega  {{(r+{\frac{1}{2}{\sigma ^2}}})\frac{{\partial \varphi }}{x}} u(x)dx -  \gamma{e^{ - \delta t}} = 0.
\end{equation}
Indeed, at each time $t$ of time interval, boundary value $S_f(t)$ by estimating the unique root of the equation ${F_{\varphi (x,t)}}(s) = 0$ will be determined with standard an iterative method such as damped Newton method of the form of
\begin{equation}
    x_{t_i,h}^{m+1} = x_{t_i,h}^{m}-\frac{{F^h_{\varphi (x,t)}}(x_{t_i,h}^{m})}{{F^{'h} _{\varphi (x,t)}}(x_{t_i,h}^{m})},
\end{equation}
where $F^h_{\varphi (x,t)}$ is finite element discretization of the nonlinear system (\ref{nonlinEq}). Thus, this strategy can be used to explicitly approximate the free boundary of migration risk rate problem.

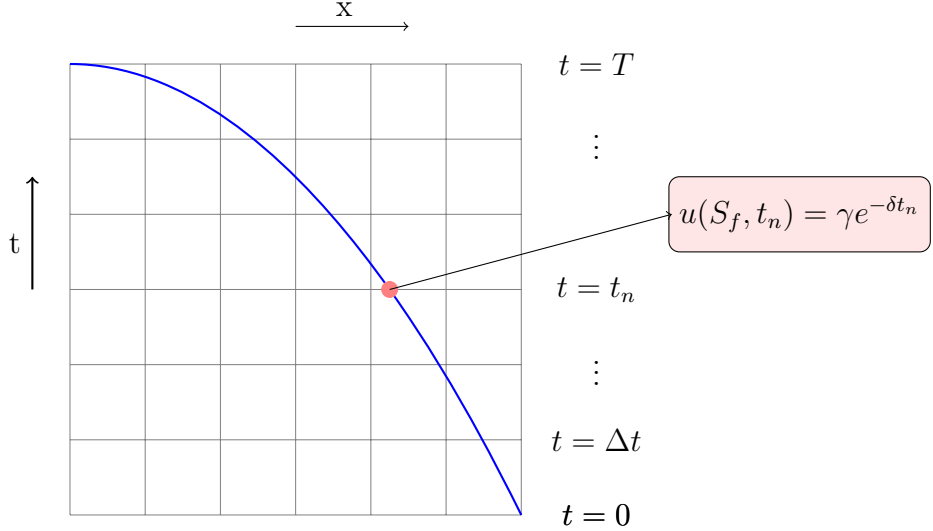
\begin{figure}[t]
  \begin{tikzpicture}
  
\draw[step=1.0cm,gray,very thin] (0,0) grid (6,6);
\draw [thick, blue](0,6) parabola (6,0);

\filldraw [red!50] (4.25,3) circle (3pt);


\draw[thin,->] (3,6.5) -- (4.5,6.5) node[pos=0.6, anchor=south east ] {x};
\draw[thick,->] (-.5,3) -- (-.5,4.5) node[pos=0.6,, anchor=north east] {t};

\node at (7,0) {$t=0$};
\node at (7,1) {$t=\Delta t$};
\node at (7,0) {$t=0$};
\node at (7,6) {$t= T$};
\node at (7,3) {$t=t_n$};

\node at (7,5) {$\vdots$};
\node at (7,2) {$\vdots$};

\tikzstyle{startstop} = [rectangle, rounded corners, minimum width=3cm, minimum height=1cm,text centered, draw=black, fill=red!10]
\node at (9.7,4) (start) [startstop] {$u(S_f,t_n)=\gamma e^{-\delta t_n}$};
\draw[->](4.25,3) --(8,4);
\end{tikzpicture}  
\centering
\caption{Symbolically finding free boundary in time step $t_n$}
\label{freeBoundary}
\end{figure}

\section{Numerical Results}\label{NumerExpremient}
In this section the efficiency and accuracy of the estimated methodology designed so far is examined by applying it on the example presented in \cite{li2018convergence}. We study the case when $r=0.5$, $\delta =0.005$, $\sigma_L=0.3$, $\sigma_H=0.2$, $F=1$, $\gamma=0.8$, $T=1$. It is known \cite{HU2015896,li2018convergence} that there is no analytical solution for the free boundary value problem (\ref{mainPDE}). Thus, we used the numerical solution of the (\ref{mainPDE}) via explicit finite difference proposed in \cite{li2018convergence} as a benchmark in order to compare the efficiency of our method.  We used the $\Delta t=1.0 \times 10^{-6}$ and $\Delta x = 1.0 \times 10^{-7}$ for the time steps and space steps respectively to attain this benchmark. We use finite element space $V_h$ of degree $r$ as investigated in the previous sections. We use Lagrange basis for generating the finite element space and Guess quadrature rule for evaluating integrals. All the computations performed in MATLAB and linear system solved with backslash operator in MATLAB.

The errors that we compute here are $\|E\|_{L_2(\Omega)}=\|u-u_h\|_{L_2(\Omega)}$, $\|E\|_{L_{\infty}(\Omega)}=\|u-u_h\|_{L_{\infty}(\Omega)}$ and ,$\|E\|_{H_1(\Omega)}=\|u-u_h\|_{H_1(\Omega)}$, where the exact solution is obtained as explained beforehand. We approximate the space of solution with the Lagrange finite element space of order $r$ to study the accuracy of the high order finite element as well. Before proceeding further, let's mention again that we are estimating the time derivative with the first order finite difference method.

Table (\ref{orderResut}) showcases the error of estimating the solution with the finite element of order $r=1,2,3$. Optimal order of convergence for approximating by a polynomial of order $r$ for $\|E\|_{\infty}$ is $r+1$, whereas the optimal order for $\|E\|_{L_2(\Omega)}$ is $r$ (see proposition of (\ref{mainProp})). However, we have not derived any theory about error in $H1$-norm, but numerical experiment shows that as we expect this accumulative error of solution and first derivative of solution is higher than the two other norm, but order is consistent with $L_2$-norm .that The order of convergence is consistent with the error estimate in (\ref{mainProp})) and (\ref{PropOrderEstim}), and it is better than expected in high order estimation. For example when $r=3$, we see that $u_h$ converges with $O(h^{9/2})$ which is better than the optimal estimate. However, the last column of table (\ref{orderResut}) depicts that the method is rather expensive in terms of computational time especially as the order of the finite element method increases. 

\begin{table}[htbp]
{\footnotesize
  \caption{Convergence Analysis of Finite Element Method, $N_e$ is the number of elements, $r$ represent the order of Lagrange shape functions}\label{orderResut}
\begin{center}
  \begin{tabular}{|c|c|c|c|c|c|} \hline
   $N_e$ & r& $|E|_{L_{2}}$  &$|E|_{H^1}$ &$|E|_{L_{inf}}$& $Time(s)$\\ \hline
     1024  &1& $0.0167\times 10^{-3}$ & $0.0558. \times 10^{-3}$&$0.0016\times 10^{-5}$ & 0.8148\\
             512   &1& $0.0335\times 10^{-3}$  &$0.0995 \times 10^{-3}$&0.0014$\times $ $10^{-5}$ &0.955\\
             256   &1& $0.0674\times 10^{-3}$ & 0.2392 $\times$ $10^{-3}$ &0.0280$\times$ $10^{-5}$ &1.812\\
             128   &1& 0.1352$\times$ $10^{-3}$ & 0.4253 $\times$ $10^{-3}$ &0.0993$\times$ $10^{-5}$ &2.336\\
             64    &1& 0.2703$\times$ $10^{-3}$ & 0.8326$\times$ $10^{-3}$ &0.3858$\times$ $10^{-5}$ &2.336\\
             1024  &2& 0.0004$\times$ $10^{-5}$ & 0.0013 $\times$ $10^{-5}$& 0.0002$\times$ $10^{-8}$ &0.814\\
             512   &2& 0.0016$\times$ $10^{-5}$  &0.0046 $\times$ $10^{-5}$&0.0001$\times$ $10^{-8}$ &0.955\\
             256   &2& 0.0065$\times$ $10^{-5}$& 0.0223 $\times$ $10^{-5}$ &0.0124$\times$ $10^{-8}$ &1.812\\
             128   &2& 0.0262$\times$ $10^{-5}$ & 0.0797 $\times$ $10^{-5}$ &0.0836$\times$ $10^{-8}$ &2.336\\
             64    &2& 0.1048$\times$ $10^{-5}$ & 0.3141$\times$ $10^{-5}$ &0.6654$\times$ $10^{-8}$ &2.336\\
             1024  &3& 0.0001$\times$ $10^{-9}$ & 0.0002. $\times$ $10^{-8}$& 0.0000$\times$ $10^{-12}$ &0.814\\
             512   &3& 0.0012$\times$ $10^{-9}$  &0.0014 $\times$ $10^{-8}$&0.0000$\times$ $10^{-12}$ &0.955\\
             256   &3& 0.0107$\times$ $10^{-9}$& 0.0138 $\times$ $10^{-8}$ &0.0018$\times$ $10^{-12}$ &1.812\\
             128   &3& 0.0976$\times$ $10^{-9}$ & 0.0983 $\times$ $10^{-8}$ &0.0348$\times$ $10^{-12}$ &2.336\\
             64    &3& 0.8893$\times$ $10^{-9}$ & 0.7694$\times$ $10^{-8}$ &0.7780$\times$ $10^{-12}$&2.336457\\ \hline
  \end{tabular}
\end{center}
}
\end{table}


Table (\ref{orderTime}) illustrates the estimate solution for the linear finite element method verses the variate time steps. The optimal error convergence for error in $L_2$ norm is one (see proposition (\ref{PropOrderEstim})). Besides, we try to experiment the time order for the $H_1$ norm with numerical simulations. Based on the result of the table (\ref{orderTime}), the estimated order $O(h^{1.091})$ is performing slightly better than the optimal order, whereas the estimate order for $H_1$ norm is less than one $O(h^{0.887})$. 
Finally, figure (\ref{surfSol}) illustrates the surface of the approximated solution with linear Lagrange finite element method.

\begin{table}[htbp]
{\footnotesize
  \caption{{Convergence analysis of time step with finite element method, $N_t$ is number of time steps }\label{orderTime}}
\begin{center}
  \begin{tabular}{|c|c|c|c|} \hline
   $N_t$ & $|E|_{L_{2}}$  &$|E|_{H^1}$& Time(s)\\ \hline
        1024  & 0.0035 $\times 10^{-5}$ & 0.0146 $\times 10^{-3}$ & 0.814\\
         512   & 0.0130 $\times 10^{-5}$  & 0.0293 $\times 10^{-3}$ &0.955\\
         256   & 0.0484$\times 10^{-5}$  & 0.0694 $\times 10^{-3}$&1.812\\
         128   & 0.1807$\times 10^{-5}$  & 0.2017 $\times 10^{-3}$&2.336\\
         64    & 0.6750 $\times 10^{-5}$  & 0.2343 $\times 10^{-3}$&2.336\\ \hline
  \end{tabular}
\end{center}
}
\end{table}


\begin{figure}[htbp]
  \centering
  \includegraphics[width=12cm]{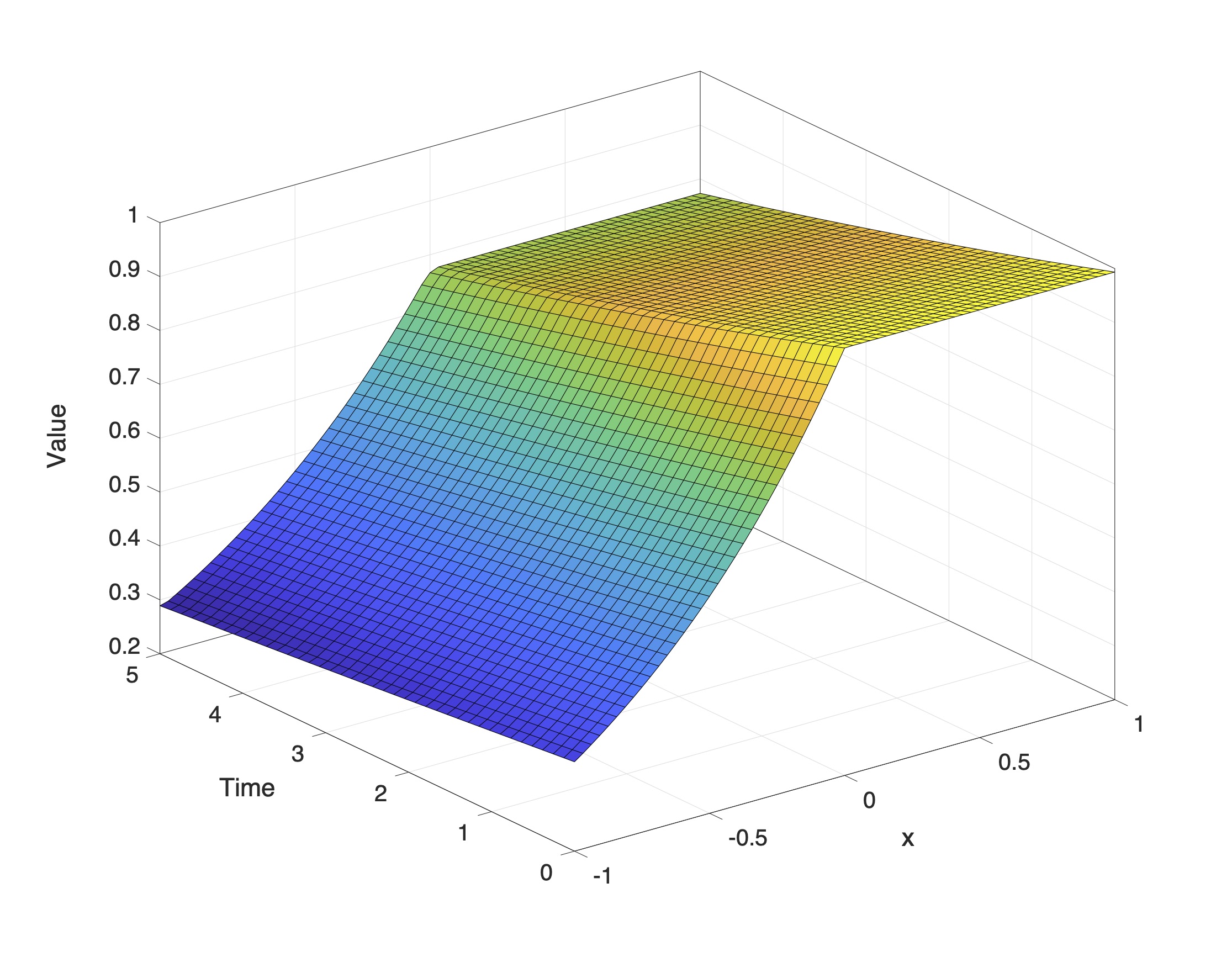}
  \caption{Approximated solution with linear finite element method }\label{surfSol}
  
\end{figure}


\section{Conclusion}
We showed that classic finite element method can be used to numerically solve the free boundary value problem arisen form the migration rate problem in credit risk study. The proposed variational form proposed in this paper is well-posed, that is the solution driven from this form is bounded. Analysis result about corresponding elliptic form of the problem assist in deriving convergence result for the numerical method for the free boundary value problem, although our estimates in this investigation are not always sharp. 
Benefiting form properties of adjoint problem and Green function, a direct method is devised to estimate the free boundary value problem. Numerical results showcased the quality of the proposed numerical methodology, and we saw better result in high order Lagrange finite element. in this work we assess the Backward Euler scheme, we may extend the method to the Crank-Nikolson scheme as well.


 \bibliographystyle{elsarticle-num} 
 \bibliography{main}





\end{document}